\newtheorem{thm}{Theorem}[section]
\newtheorem{prop}[thm]{Proposition}
\newtheorem{rem}[thm]{Remark}
\theoremstyle{definition}
\newtheorem{exmp}{Example}[section]
\numberwithin{equation}{section}
\DeclareMathAlphabet{\mathpzc}{OT1}{pzc}{m}{it}
\newcommand{\N}{\mathbb N}
\newcommand{\R}{\mathbb R}
\newcommand{\C}{\mathbb C}
\newcommand{\K}{\mathbb K}
\newcommand{\bA}{\mathbb A}
\newcommand{\bB}{\mathbb B}
\newcommand{\s}{\mathbb S}
\newcommand{\kL}{\mathcal L}
\newcommand{\ve}{\varepsilon}
\newcommand{\wt}{\widetilde}
\newcommand{\wh}{\widehat}
\newcommand{\ov}{\overline}
\newcommand{\rd}{\mathrm{d}}
\newcommand{\p}{\partial}
\newcommand{\bqn}{\begin{equation}}
\newcommand{\eqn}{\end{equation}}
\newcommand{\PV}{\mathop{\rm PV}\nolimits}
\begin{document}

\title[The principle of linearized stability for  quasilinear evolution equations]{On the principle of linearized stability  in interpolation spaces for  quasilinear evolution equations}

\author{Bogdan--Vasile Matioc}
\address{Fakult\"at f\"ur Mathematik, Universit\"at Regensburg,   93053 Regensburg, Deutschland.}
\email{bogdan.matioc@ur.de}

\author{Christoph Walker}
\address{Leibniz Universit\"at Hannover\\
Institut f\"ur Angewandte Mathematik\\
Welfengarten 1\\
30167 Hannover\\
Germany}
\email{walker@ifam.uni-hannover.de}

\begin{abstract}
 We give a proof for the asymptotic exponential stability of equilibria of quasilinear parabolic evolution equations in admissible interpolation spaces.
\end{abstract}

\keywords{Quasilinear parabolic problem; Principle of linearized stability; Interpolation spaces}
\subjclass[2010]{35B35; 	35B40;	35K59} 

\maketitle
\section{Introduction}\label{Sec:1}

The principle of linearized stability is a well-known technique in various nonlinear evolution equations for proving stability of equilibria. There is a vast literature on this topic under different assumptions, see e.g. \cite{DaPL88,D89,G88,Lu85,L95,PF81,P02,PSZ09,PSZ09b} though this list is far from being complete. For autonomous fully nonlinear parabolic problems
$$
\dot v=F(v)\,,\quad t\ge 0\,,\qquad v(0)=v^0\,,
$$ with $F\in C^{2-}(E_1,E_0)$\footnote{ We use  $C^{k-}$, $1\leq k\in\N$, to denote the space of functions which possess a locally Lipschitz continuous $(k-1)$th derivative.   
Similarly, $C^\vartheta$ with $\vartheta\in(0,1)$ denotes local H\"older
continuity.}, the use of H\"older maximal regularity allows one to obtain stability of an equilibrium $v_*\in E_1$ in the domain $E_1$ of the Fr\'echet derivative $\p F(v_*)$, see \cite{DaPL88,Lu85,L95,PSZ09,PSZ09b}. 
The stability issue can also be addressed based on maximal $L_p$-regularity of $\p F(v_*)$ in the real interpolation space $(E_0,E_1)_{1-1/p,p}$
or based on continuous maximal regularity of $\p F(v_*)$ in the continuous interpolation space $(E_0,E_1)_{\mu,\infty}^0$, see
\cite{P02,PSZ09,PSZ09b}. These results apply in particular to quasilinear parabolic problems
\begin{equation}\label{EE}
\dot v+A(v)v=f(v)\,,\quad t>0\,,\qquad v(0)=v^0\,.
\end{equation}
For such problems, however, there are other meaningful choices for phase spaces on which the nonlinearities $A$ and $f$ are defined. In \cite{Am93} a complete well-posedness theory for parabolic equations of the form \eqref{EE} is outlined in general
interpolation spaces $E_\alpha=(E_0,E_1)_\alpha$ with an arbitrary admissible interpolation functor $(\cdot,\cdot)_\alpha$ and outside the setting of
 maximal regularity.  
 The main goal of the present research is to
establish the principle of linearized stability for \eqref{EE} in general interpolation spaces $E_\alpha$  within the framework of~\cite{Am93}. \\

We consider in this paper quasilinear evolution equations of the form \eqref{EE}.
Throughout we  let $E_0$ and $E_1$ be Banach spaces over $\K\in \{\R,\C\}$ with continuous and dense embedding 
$$
E_1 \stackrel{d}{\hookrightarrow} E_0\,.
$$
We further let $\mathcal{H}(E_1,E_0)$ denote the open subset of $\kL(E_1,E_0)$  
consisting of negative generators of
strongly continuous analytic semigroups. 
More precisely, $A\in {\mathcal H}(E_1,E_0)$ if
$-A,$ considered as an unbounded operator in $E_0$ with  domain $E_1$, generates a strongly continuous  analytic  semigroup in $\kL(E_0).$

 Given $\theta\in (0,1)$, we fix an admissible interpolation  functor $(\cdot,\cdot)_\theta$, that is,
$$
E_1\stackrel{d}{\hookrightarrow} E_\theta:= (E_0,E_1)_\theta \stackrel{d}{\hookrightarrow} E_0
$$
and put $\|\cdot\|_\theta:=\|\cdot\|_{E_\theta}$.
We further fix
\begin{equation}\label{a1a}
0<\gamma\le \beta<\alpha < 1
\end{equation}
and assume that
\begin{equation}\label{a1b}
\emptyset\not= O_\beta\ \text{ is open in }\ E_\beta\,.
\end{equation}
Then $O_\delta:=O_\beta\cap E_\delta$ is, for $\delta\in[\beta, 1]$,  an open subset of $E_\delta$. The operators $A$ and $f$  in \eqref{EE} are assumed to satisfy
\begin{equation}\label{a1c}
(A,f)\in C^{1-}\big(O_\beta,\mathcal{H}(E_1,E_0)\times E_\gamma\big)\,.
\end{equation}

Given $v^0\in O_\alpha$,  we are interested in the existence and the qualitative properties of classical solutions  to \eqref{EE}, that is, of  functions
\[
v\in C(\dot I,E_1)\cap C^1(\dot I,E_0)\cap C(I,O_\alpha)
\]
which satisfy \eqref{EE} pointwise. Here, the interval  $I$ is either $[0,T)$ or $[0,T]$ for some $T\in(0,\infty]$   and $\dot I:= I\setminus\{0\}$.

Clearly, the well-posedness of quasilinear or even fully nonlinear problems has attracted considerable attention in the past. There are many results on abstract equations, including e.g. \cite{A86,Am88,A90,Am93,A05,G88,Lu84,Lu85,Lu85b,PS16,PSW18} for quasilinear and e.g. \cite{AT88,DaPL88,DaP96,DaPG79,Lu87,L95} for fully nonlinear problems
using different tools from theories on semigroups, evolution operators, and maximal regularity (though none of these lists is complete, of course). In particular, the next theorem is stated in  \cite[Theorem 12.1, Theorem 12.3]{Am93} (see also \cite{A86}). 
 We will use part of its proof  when subsequently establishing the stability result in Theorem~\ref{T2}. For this reason and since a complete proof of this result merely assuming \eqref{a1a}-\eqref{a1c} does not seem to be available in the literature to the best of our knowledge (though it is contained  in \cite{A86} for nonautonomous problems under slightly stronger assumptions), we include it here. 
We also refer to \cite[Appendix B]{M16x} for a proof for the homogeneous case $f=0$ and to \cite[Theorem 1.1]{ELW15} for
 a proof in a concrete application (see also \cite[Theorem 2.2]{ChW10} for a similar fixed point argument).\\

\begin{thm}\label{T1}
Suppose \eqref{a1a}-\eqref{a1c}. 
\begin{itemize}
\item[(i)] {\em (Existence)} Given any $v^0\in O_\alpha$, the Cauchy problem \eqref{EE} possesses a maximal classical solution
\begin{equation*} 
 v=v(\cdot;v^0)\in C^1((0,t^+(v^0)),E_0)\cap C((0,t^+(v^0)),E_1)\cap  C ([0,t^+(v^0)),O_\alpha)\,,
\end{equation*}
where $ t^+(v^0)\in(0,\infty]$,  such that 
\begin{equation*}
  v(\cdot;v^0)\in C^{\alpha-\eta}([0,t^+(v^0)),E_\eta)\,,\quad \eta\in [0,\alpha]\,.
\end{equation*}
  If $v^0\in O_1$, then $v(\cdot;v^0)\in C^1([0,t^+(v^0)),E_0)\cap C([0,t^+(v^0)),E_1)$.
\item[(ii)]  {\em (Uniqueness)} If $$\wt v\in C((0,T],E_1)\cap C^1((0,T],E_0)\cap C^\vartheta([0,T],O_\beta)$$ solves
\eqref{EE} pointwise for some $T>0$ and $\vartheta\in (0,1)$, then   $\wt v=v(\cdot;v^0)$  on $[0,T]$.
\item[(iii)] {\em (Continuous dependence)} The mapping $[(t,v^0)\mapsto v(t;v^0)]$ defines a semiflow\footnote{That is, $\mathcal{D}:=\{(t,v^0)\,:\, 0\le t<t^+(v^0)\,,\,v^0\in O_\alpha\}$ is open in $[0,\infty)\times E_\alpha$ and the function $$v:\mathcal{D}\rightarrow E_\alpha\,,\quad (t,v^0)\mapsto v(t;v^0)$$ is continuous with $v(0;v^0)=v^0$ and $v(t;v(s;v^0))=v(t+s;v^0)$ for $0\le s< t^+(v^0)$ and $0\le t<t^+(v(s;v^0))$.} on $O_\alpha$.
\item[(iv)] {\em (Global existence)} If the orbit $v([0,t^+(v^0));v^0)$ is relatively compact in $O_\alpha$, then $t^+(v^0)=\infty$.
\item[(v)] {\em (Blow-up criterion)} Let $v^0\in O_\alpha$ be such that $t^+(v^0)<\infty$. Then:
\begin{itemize}
\item[($\alpha$)] If
$v(\cdot;v^0):[0,t^+(v^0))\to E_\alpha$ is uniformly continuous, then
\begin{equation}\label{1a} 
 \lim_{t\nearrow t^+(v^0)}\mathrm{dist}_{E_\alpha}\big( v(t;v^0),\partial O_\alpha\big)=0\,.
\end{equation}
\item[($\beta$)]  If $E_1$ is compactly embedded in $E_0$, then 
\begin{equation}\label{1}
\lim_{t\nearrow t^+(v^0)}\|v(t;v^0)\|_{\eta}=\infty\qquad\text{ or }\qquad \lim_{t\nearrow t^+(v^0)}\mathrm{dist}_{E_\beta}\big( v(t;v^0),\partial O_\beta\big)=0
\end{equation}
for each $\eta\in (\beta,1)$.
\end{itemize}
\end{itemize}
\end{thm}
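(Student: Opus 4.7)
The plan is to reduce the Cauchy problem \eqref{EE} to a fixed-point problem for the (non-autonomous) linear Cauchy problem with coefficient $-A(v(\cdot))$. First, using that $\mathcal{H}(E_1,E_0)$ is open in $\kL(E_1,E_0)$ and that $A\in C^{1-}(O_\beta,\mathcal{H}(E_1,E_0))$, I would pick $\rho>0$ with $\overline B_{E_\beta}(v^0,\rho)\subset O_\beta$ such that $A(v)\in\mathcal{H}(E_1,E_0)$ lies in a fixed class of negative generators of analytic semigroups with uniform sectoriality constants for all $v$ in that ball. This yields uniform Amann-type smoothing and H\"older-continuity estimates
\[
\|e^{-tA(v)}\|_{\kL(E_\theta,E_\sigma)}\le Ct^{-(\sigma-\theta)},\qquad 0\le\theta\le\sigma\le 1,\ t\in(0,T_0],
\]
together with the corresponding evolution operator $U_v(t,s)$ solving $\p_t U_v(t,s)+A(v(t))U_v(t,s)=0$ in $E_0$ whenever $v$ is H\"older on $[0,T]$ with values in $E_\beta$.

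For existence (part (i)), I would work in the complete metric space
\[
V_{T,K}:=\bigl\{v\in C([0,T],E_\alpha)\cap C^{\alpha-\beta}([0,T],E_\beta):v(0)=v^0,\ \|v(t)-v^0\|_\beta\le\rho,\ [v]_{C^{\alpha-\beta}}\le K\bigr\},
\]
and define $\Phi(v)(t):=U_v(t,0)v^0+\int_0^t U_v(t,s)f(v(s))\,\rd s$. Using the smoothing bound above together with the Da Prato--Grisvard--Sinestrari-style identity
\[
U_v(t,0)v^0-v^0=-\int_0^t U_v(t,s)A(v(s))v^0\,\rd s,
\]
and the fact that $v^0\in E_\alpha$, $f(v)\in E_\gamma$ with $\gamma\le\beta<\alpha<1$, I would show $\Phi(V_{T,K})\subset V_{T,K}$ and that $\Phi$ is a contraction in the $C([0,T],E_\alpha)$-norm for $T$ sufficiently small, thanks to the Lipschitz continuity \eqref{a1c} of $(A,f)$ on $O_\beta$. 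The H\"older estimate $v\in C^{\alpha-\eta}([0,t^+),E_\eta)$ for $\eta\in[0,\alpha]$ then follows by interpolating between $\eta=0$ (where the exponent $\alpha$ is provided by $v^0\in E_\alpha$ and the action of $U_v$) and $\eta=\alpha$; the strengthening for $v^0\in O_1$ uses that $U_v(\cdot,0)v^0\in C([0,T],E_1)\cap C^1([0,T],E_0)$ when $v^0\in E_1$. The maximal solution is obtained by standard extension, picking up at $v(s;v^0)$ using uniqueness.

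For uniqueness (ii) I would subtract the integral formulae for two solutions sharing the same initial datum, exploit that the H\"older assumption on $\tilde v$ legitimizes working with $U_{\tilde v}$, and close a singular Gronwall estimate in the norms of $E_\beta$ and $E_\alpha$. Continuous dependence (iii) follows by re-running the fixed-point construction with $v^0$ varying in an $E_\alpha$-neighborhood, showing that $T$ and $K$ can be chosen uniformly, and that $\Phi$ depends continuously on $v^0$; openness of $\mathcal{D}$ is then a standard consequence of uniform local existence along trajectories. Global existence (iv) follows by covering a relatively compact orbit in $O_\alpha$ by finitely many balls on which local existence times are bounded below. For the blow-up criterion (v), if $t^+<\infty$ and $v:[0,t^+)\to E_\alpha$ is uniformly continuous, $v(t)$ has an $E_\alpha$-limit $v_\star$; if $v_\star\in O_\alpha$, part (i) applied at $v_\star$ contradicts maximality, yielding ($\alpha$). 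Under a compact embedding $E_1\hookrightarrow E_0$, a standard contradiction argument combined with interpolation and the fact that bounded sets in $E_\eta$ for $\eta\in(\beta,1)$ are relatively compact in $E_\beta$ gives ($\beta$).

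The main obstacle, and the reason this result does not follow from the existing maximal-regularity-based proofs, is that the interpolation functor $(\cdot,\cdot)_\theta$ is only assumed admissible, so none of the standard $L_p$- or continuous-maximal-regularity toolboxes are available. The delicate point is therefore to carry out the contraction in a function space whose norm is compatible with $E_\alpha$, while at the same time encoding enough H\"older regularity in the lower space $E_\beta$ to construct the evolution operator $U_v(t,s)$ and to estimate both the nonlinearity $(A(v)-A(w))$ (which sees $E_1$) and $f(v)-f(w)$ (which lives in $E_\gamma$). The calibration of the exponents in \eqref{a1a}--\eqref{a1c} together with the interpolation-scale smoothing estimates is precisely what makes this work.
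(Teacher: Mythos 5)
Your overall architecture --- reformulating \eqref{EE} as the fixed--point problem \eqref{ttp} for the evolution operator $U_{A(v)}$, proving uniform local existence, extending by uniqueness, and deriving (iii)--(v) from that --- is the same as the paper's (Proposition~\ref{P1}). However, there is a genuine gap at the core of your local existence argument: you claim that $\Phi$ is a contraction on $V_{T,K}$ \emph{in the $C([0,T],E_\alpha)$-norm} for small $T$. The problematic term is $U_{A(v)}(t,0)v^0-U_{A(w)}(t,0)v^0$ measured in $E_\alpha$. Writing it as $\int_0^t U_{A(v)}(t,s)\big[A(w(s))-A(v(s))\big]U_{A(w)}(s,0)v^0\,\rd s$ and using the only available smoothing bounds $\|U_{A(v)}(t,s)\|_{\kL(E_0,E_\alpha)}\le C(t-s)^{-\alpha}$ and $\|U_{A(w)}(s,0)v^0\|_1\le Cs^{\alpha-1}\|v^0\|_\alpha$, the time integral produces the Beta--function constant $\int_0^t(t-s)^{-\alpha}s^{\alpha-1}\,\rd s=\mathrm{B}(1-\alpha,\alpha)$, which carries \emph{no} positive power of $T$; the resulting Lipschitz constant is of order $\|v^0\|_\alpha$ and cannot be made less than $1$ by shrinking $T$. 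This is exactly why the paper performs the contraction in the \emph{weaker} norm $C([0,T],E_\beta)$, where the same term is measured in $\kL(E_0,E_\beta)$ and a factor $T^{\alpha-\beta}$ appears (see \eqref{po}), on a set $\mathcal{V}_T$ defined purely by $E_\beta$-conditions so that it is closed, hence complete, in $C([0,T],E_\beta)$. (Your $V_{T,K}$, which imposes membership in $C([0,T],E_\alpha)$, need not be closed under $C([0,T],E_\beta)$-convergence, so you cannot simply switch metrics on it.) The Lipschitz continuity of the solution map in $E_\alpha$ is then recovered \emph{a posteriori} by the two-step bootstrap \eqref{p}: first with $\mu=\beta$ to control $\|v(\cdot;v^0)-v(\cdot;v^1)\|_{C([0,T],E_\beta)}$, then with $\mu=\alpha$.

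Two further points. The identity $U_v(t,0)v^0-v^0=-\int_0^t U_v(t,s)A(v(s))v^0\,\rd s$ that you invoke is not available for $v^0\in E_\alpha$ with $\alpha<1$, since $A(v(s))v^0$ is undefined in $E_0$ unless one passes to extrapolation scales; the paper instead quotes the H\"older estimates of \cite[II.Theorem 5.3.1]{Am95} directly, which is what yields \eqref{lip}. And in part (v)($\beta$), boundedness of the orbit in $E_\eta$ for $\eta\in(\beta,\alpha]$ does not give relative compactness in $E_\alpha$ even with $E_1\hookrightarrow E_0$ compact; you need the paper's observation that the existence theory only uses $\beta<\alpha$, so that one may without loss of generality take $\eta>\alpha$ (equivalently, rerun the construction with a phase-space exponent $\alpha'\in(\beta,\eta)$ and use uniqueness to identify the solutions).
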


The proof of Theorem~\ref{T1} is given in Section~\ref{Sec:2}.

\begin{rem}\label{R2}
\begin{itemize}
\item[(i)]   Theorem~\ref{T1} remains true if the assumption $f\in C^{1-}(O_\beta,E_\gamma)$ in \eqref{a1c} is replaced by the assumption that $f:O_\beta\rightarrow E_\gamma$ is bounded on bounded sets
  and that $f\in C^{1-}(O_\beta,E_0)$, see Remark~\ref{R1}.
\item[(ii)]    Actually, the maximal solution from  Theorem~\ref{T1} is 
unique among all solutions in $C^\vartheta([0,T],O_\beta)$  to the fixed point formulation \eqref{ttp} (see the proof of Proposition~\ref{P1}). 
In some applications, see e.g. \cite[Theorem 1.1]{M16x} or \cite[Theorem 1.1]{M17x}, one can establish, by exploiting the quasilinear structure of the equation, a priori  H\"older regularity with respect to time in $E_\beta$ (for some 
small H\"older exponent $\vartheta$) for classical solutions. This allows one to apply the uniqueness result of Theorem~\ref{T1}.  
A further advantage of the uniqueness feature of Theorem~\ref{T1} is that one can use parameter tricks to improve the space-time regularity of the solutions, cf. e.g. \cite[Theorem 1.3]{M16x}.
\item[(iii)] As stated in \cite{Am93}, the mapping 
$$[(t,v^0)\mapsto v(t;v^0)]:\{(t,v^0)\,:\, 0<t<t^+(v^0)\,, v^0\in O_\alpha\}{\rightarrow E_\alpha}$$ 
belongs to the class $C^k$, 
$k\in \N\cup\{\infty,\omega\}$ (where $\omega $ denotes real-analyticity) if $$(A,f)\in C^k\big(O_\beta,\mathcal{H}(E_1,E_0)\times E_\gamma\big)\,.$$ 
The main ideas of proof for this claim can be found in \cite[Theorem 11.3]{Am88} (see also \cite{Lu82}).
\end{itemize}
\end{rem}

 Next, we turn to the stability of equilibria to quasilinear parabolic problems 
in interpolation spaces. 
In order to present the general result  we suppose  that 
\begin{equation}\label{a1e}
v_*\in O_1 \quad\text{with}\quad A(v_*)v_*=f(v_*)
\end{equation}
is an equilibrium solution to \eqref{EE}.
Additionally, we assume that 
\begin{equation}\label{a1f}
f:O_\beta\rightarrow E_0 \quad\text{and}\quad A(\cdot)v_*:O_\beta\rightarrow  E_0 \quad\text{are Fr\'echet differentiable at}\ v_* 
\end{equation}
with Fr\'echet derivatives $\p f(v_*)$, respectively, and $(\p A(v_*)[\cdot])v_*$,
and that the linearized operator
 $$\bA:=  -A(v_*)-(\p A(v_*)[\cdot])v_*+\p f(v_*)$$
  has a negative spectral bound, that is,
\begin{equation}\label{a1g}
-\omega_0:=\sup\left\{\mathrm{Re}\, \lambda\,:\, \lambda \in \sigma(\bA)\right\}<0\,.
\end{equation}


The following result now states that, under the assumptions above,
the equilibrium $v_*$ is asymptotically exponentially stable 
in the phase space~$E_\alpha=(E_0,E_1)_\alpha$ with an arbitrary admissible interpolation functor $(\cdot,\cdot)_\alpha$ (provided \eqref{a1a}-\eqref{a1c} hold), the proof is contained in Section~\ref{Sec:3}. 
As mentioned before, stability results in $E_1$ can be found e.g. in \cite{DaPL88,Lu85,L95,PSZ09,PSZ09b}, and stability results in the  real interpolation space $(E_0,E_1)_{1-1/p,p}$
or in the continuous interpolation space $(E_0,E_1)_{\mu,\infty}^0$ can be found e.g. in
\cite{P02,PSZ09,PSZ09b}. 
In Theorem \ref{T2}, however, we may choose e.g. $(\cdot,\cdot)_\alpha$ to be the complex interpolation functor, which is not possible for the other stability results, and, moreover, we require less assumptions as we do not assume  that the Fr\'echet derivative $\bA$ has maximal regularity.

\begin{thm}[Exponential stability]\label{T2}
Suppose in addition to \eqref{a1a}-\eqref{a1c} that also \eqref{a1e}-\eqref{a1g} hold. Then $v_*$ is asymptotically exponentially stable
 in $E_\alpha$. More precisely, given any $ \omega\in (0,\omega_0)$, there is $\ve_0>0$ and $M\ge 1$ such that, 
 for each $v^0\in \ov\bB_{E_\alpha}(v_*,\ve_0)$, the unique solution to \eqref{EE} exists globally in time and
\begin{equation}\label{stable}
\|v(t;v^0)-v_*\|_\alpha\le M e^{-\omega t}\|v^0-v_*\|_\alpha\,,\quad t\ge 0\,.
\end{equation}
\end{thm}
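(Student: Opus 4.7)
The plan is to linearize \eqref{EE} around $v_*$, invoke the local existence and uniqueness supplied by Theorem~\ref{T1}, and close a bootstrap estimate on the resulting variation-of-constants representation in a weighted norm that simultaneously encodes exponential decay in $E_\alpha$ and the instantaneous parabolic smoothing into $E_1$. Fix $\omega\in(0,\omega_0)$ throughout.

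Setting $u:=v-v_*$ and using $A(v_*)v_*=f(v_*)$, equation \eqref{EE} is equivalent to
\[
\dot u = \bA u + h(u),\qquad u(0)=u_0:=v^0-v_*,
\]
with $h(u):=-R_A(u)v_*-[A(v_*+u)-A(v_*)]u+R_f(u)$, where $R_A(u):=A(v_*+u)-A(v_*)-\p A(v_*)[u]$ and $R_f(u):=f(v_*+u)-f(v_*)-\p f(v_*)[u]$ are Fr\'echet remainders. From \eqref{a1f} one has $\|R_A(u)v_*\|_0+\|R_f(u)\|_0\le \ve(\|u\|_\beta)\|u\|_\beta$ with $\ve(s)\to 0$ as $s\to 0$, while the local Lipschitz property in \eqref{a1c} gives $\|[A(v_*+u)-A(v_*)]u\|_0\le L\|u\|_\beta\|u\|_1$. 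Moreover, $\bA$ differs from $-A(v_*)\in\mathcal{H}(E_1,E_0)$ by an element of $\kL(E_\beta,E_0)$, so $\bA\in\mathcal{H}(E_1,E_0)$, and the spectral bound \eqref{a1g} combined with analyticity produces $M\ge 1$ with
\[
\|e^{t\bA}\|_{\kL(E_\theta,E_\rho)}\le M\,t^{-(\rho-\theta)}e^{-\omega t},\qquad t>0,\ 0\le\theta\le\rho\le 1.
\]

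For $v^0\in\ov\bB_{E_\alpha}(v_*,\ve_0)\subset O_\alpha$, let $v=v(\cdot;v^0)$ be the maximal solution from Theorem~\ref{T1} and $u:=v-v_*$, so $u(t)=e^{t\bA}u_0+\int_0^t e^{(t-s)\bA}h(u(s))\,\rd s$. Pick $\rho>0$ small enough that $\kappa(\rho):=C_*(L\rho+\ve(\rho))<1$, where $C_*$ gathers the beta-function constants supplied by the relevant convolutions, and set $\tau^*:=\sup\{\tau\in[0,t^+(v^0)):\|u(t)\|_\beta\le\rho\ \text{for}\ t\in[0,\tau]\}$. On $[0,\tau^*]$, the semigroup estimates and the pointwise bounds on $h$ combine to the self-absorbing inequality
\[
\Psi(\tau^*):=\sup_{t\in[0,\tau^*]}\bigl(e^{\omega t}\|u(t)\|_\alpha+t^{1-\alpha}e^{\omega t}\|u(t)\|_1\bigr)\le C\|u_0\|_\alpha+\kappa(\rho)\Psi(\tau^*),
\]
hence $\Psi(\tau^*)\le C'\|u_0\|_\alpha$ uniformly in $\tau^*$. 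For $\ve_0$ so small that $C'\ve_0<\rho$, the embedding $E_\alpha\hookrightarrow E_\beta$ forces $\|u(t)\|_\beta<\rho$ strictly on $[0,\tau^*]$; thus $\tau^*=t^+(v^0)$, and the uniform $E_\alpha$-bound together with Theorem~\ref{T1}(v) rules out $t^+(v^0)<\infty$. The bound on $\Psi$ then yields \eqref{stable}.

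The main technical obstacle is the quasilinear remainder $[A(v_*+u)-A(v_*)]u$, whose $E_0$-norm is of order $\|u\|_\beta\|u\|_1$ and therefore genuinely involves the $E_1$-norm of $u$. Since $u_0$ is only in $E_\alpha$, the companion norm $\|u(t)\|_1$ must be weighted with the power $t^{-(1-\alpha)}$, which is singular at $t=0$; keeping the resulting doubly-singular convolutions uniformly finite on $[0,\infty)$ while preserving the exponential factor $e^{-\omega t}$ (so that the self-absorbing bootstrap closes on the whole half-line) is the central calculation, and it depends crucially on $0<\alpha<1$ via the Euler beta function and on the spectral gap $\omega_0>0$ from~\eqref{a1g}.
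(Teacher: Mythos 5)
Your overall strategy---freeze the linearization $\bA$, write $u(t)=e^{t\bA}u_0+\int_0^t e^{(t-s)\bA}h(u(s))\,\rd s$, and close a weighted bootstrap---is the classical semilinear one, and it breaks down at exactly the point you yourself single out as ``the central calculation''. The quasilinear remainder $[A(v_*+u)-A(v_*)]u$ is controlled only in $E_0$, by $L\|u\|_\beta\|u\|_1$, so the $E_1$-component of $\Psi$ forces you to estimate
\[
t^{1-\alpha}\int_0^t\big\|e^{(t-s)\bA}\big\|_{\kL(E_0,E_1)}\,\|u(s)\|_\beta\,\|u(s)\|_1\,\rd s
\;\le\; C\,\rho\,\Psi\, t^{1-\alpha}\int_0^t (t-s)^{-1}e^{-\omega(t-s)}\,s^{-(1-\alpha)}e^{-\omega s}\,\rd s\,,
\]
and the kernel $(t-s)^{-1}$ coming from $\kL(E_0,E_1)$ is not integrable: the relevant ``beta function'' here is $B(0,\alpha)=\infty$. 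The self-absorbing inequality for $\Psi$ therefore cannot be derived, and no choice of $\rho$, $\ve_0$, or $C_*$ repairs it. (The $E_\alpha$-component alone is fine, since there the kernel is $(t-s)^{-\alpha}s^{-(1-\alpha)}$ and $B(1-\alpha,\alpha)<\infty$; the failure is precisely in recovering $\|u(t)\|_1$, which you cannot avoid needing because the perturbation is quasilinear.) Rescuing this route requires some form of maximal regularity for $\bA$ (H\"older, $L_p$, or continuous interpolation), which is exactly what the cited stability results of Lunardi, Pr\"uss et al.\ assume and what the paper explicitly avoids.

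The paper's proof sidesteps the divergence by never freezing the top-order coefficient: it rewrites the problem as $\dot u+\wh A(u)u=\wh f(u)$ with $\wh A(w)=A(w+v_*)-\p f(v_*)+(\p A(v_*)[\cdot])v_*$, so that $A(v_*+u)u$ is absorbed into a time-dependent generator and the forcing $\wh f(u)$ is superlinear, satisfying $\|\wh f(u)\|_0\le\xi\|u\|_\beta$ and requiring only the integrable singularity $(t-s)^{-\alpha}$ of $U_{\wh A(u)}(t,s)$ in $\kL(E_0,E_\alpha)$. The price is that one must prove the parabolic evolution operator $U_{\wh A(u)}(t,s)$ itself decays like $e^{-(\omega+\delta)(t-s)}$; by Amann's estimates this decay rate degrades with the time-H\"older seminorm of $t\mapsto\wh A(u(t))$, which is why the paper's bootstrap runs over the class $\mathcal{M}(T)$ of orbits that are small in $E_\eta$ \emph{and} have small H\"older seminorm in time, rather than over the condition $\sup_t\|u(t)\|_\beta\le\rho$ alone. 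As written, your argument has a genuine gap at the closure of the $E_1$-weighted estimate.
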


We also prove instability of an equilibrium $v_*\in O_1$ assuming that
\begin{equation}\label{a2a}
(A,f)\in C^{2-}\big(O_1,\kL(E_1,E_0)\times  E_0\big)
\end{equation}
with
\begin{equation}\label{a2b}
 \p f(v_*),\, (\p A(v_*)[\cdot])v_* \in \kL(E_{\eta},E_0)\quad\text{for some $\eta\in[\beta,1)$}\,.
\end{equation}
The latter condition is in accordance with \eqref{a1c}.
With respect to the spectrum of the linearized operator $\bA$ from above we now require that  
\begin{equation}\label{a2c}
 \left\{
 \begin{array}{lll}
 \sigma_+(\bA):=\{\lambda\in \sigma(\bA)\,:\,  {\rm Re\,}\lambda>0\}\neq \emptyset,\\[1ex]
 \omega_+:=\inf \{{\rm Re\,}\lambda\,:\, \lambda\in \sigma_+(\bA)\}>0\,.
 \end{array}
 \right. 
\end{equation}
These conditions guarantee that $v_*$ is unstable  in the phase space $E_\alpha$ 
as shown in the next theorem. 
We point out that this result is closely related to \cite[Theorem 9.1.3]{L95}, 
where instability in $E_1$ is proven, and follows along the lines of the proof of the latter.

\begin{thm}[Instability]\label{T3} Let $v_*\in O_1$ be an equilibrium to \eqref{EE}.
Suppose in addition to \eqref{a1a}-\eqref{a1c} that also \eqref{a2a}-\eqref{a2c} hold. Then $v_*$ is unstable
 in $E_\alpha$. More precisely, there exists a neighborhood $U$ of $v_*$ in $O_\alpha$ such that for each $n\in\N^*$ 
 there are initial data
 $v_{n}^0\in \bB_{E_\alpha}(v_*,1/n)\cap O_\alpha$ for which the corresponding solution $v(\cdot; v_n^0)$ satisfies
 \[
 v(t; v_n^0)\not\in U\quad\text{for some $t\in(0,t^+(v_n^0))$.}
 \]
\end{thm}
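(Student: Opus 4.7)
The plan is to argue by contradiction, following the strategy of \cite[Theorem 9.1.3]{L95}. Setting $w:=v-v_*$, equation \eqref{EE} becomes $\dot w=\bA w+G(w)$, with
$$G(w):=[f(v_*+w)-f(v_*)-\p f(v_*)w]-[A(v_*+w)-A(v_*)-\p A(v_*)[w]]v_*-[A(v_*+w)-A(v_*)]w.$$
The $C^{2-}$-regularity in \eqref{a2a} together with Taylor's formula with Lipschitz second derivative yields $\|G(w)\|_0\le C\|w\|_1^{\,2}$ on a bounded $E_1$-neighborhood of the origin, so in particular $G(0)=0$ and $\p G(0)=0$. Moreover, by \eqref{a2b} the operator $\bA+A(v_*)$ belongs to $\kL(E_\eta,E_0)\subset\kL(E_1,E_0)$, so $\bA$ is a bounded perturbation of $-A(v_*)\in\mathcal H(E_1,E_0)$ and hence itself generates an analytic semigroup on $E_0$ with domain $E_1$.

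Sectoriality of $\bA$ combined with \eqref{a2c} ensures that $\sigma_+(\bA)$ is bounded and separated by a strip of width $\omega_+$ from $\sigma_-(\bA):=\sigma(\bA)\setminus\sigma_+(\bA)\subset\{{\rm Re\,}\lambda\le 0\}$. Let $P_+:=(2\pi i)^{-1}\oint_\Gamma(\lambda-\bA)^{-1}\,d\lambda$ denote the Dunford--Riesz projection associated with $\sigma_+(\bA)$ via a small contour $\Gamma$ around $\sigma_+(\bA)$ in the right half-plane, and set $P_-:=I-P_+$ and $E^\pm:=P_\pm(E_0)$. Then $E^+\subset E_1\hookrightarrow E_\alpha$ (because the resolvent maps $E_0$ into $E_1$), and the splitting $E_0=E^+\oplus E^-$ is $\bA$-invariant. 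Since $\bA|_{E^+}$ is bounded with spectrum $\sigma_+(\bA)\subset\{{\rm Re\,}\lambda\ge\omega_+\}$ and $\{e^{t\bA|_{E^-}}\}_{t\ge 0}$ is analytic with spectral bound $\le 0$, one obtains, for any fixed $\omega\in(0,\omega_+)$ and some $M\ge 1$, the estimates
$$\|e^{t\bA}\phi\|_\alpha\ge M^{-1}e^{\omega_+ t}\|\phi\|_\alpha\ (\phi\in E^+,\,t\ge 0),\quad\|e^{t\bA}P_-\|_{\kL(E_\alpha)}\le M e^{\omega t},\quad\|e^{t\bA}P_-\|_{\kL(E_0,E_\alpha)}\le M t^{-\alpha}e^{\omega t}\ (t>0).$$

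Fix a normalized $\phi\in E^+$ with $\|\phi\|_\alpha=1$. Since $v_*\in O_1$ and $\phi\in E_1$, for every sufficiently small $\ve>0$ the initial value $v_\ve^0:=v_*+\ve\phi$ lies in $O_1\subset O_\alpha$ and Theorem~\ref{T1} yields a maximal classical solution $v_\ve:=v(\cdot;v_\ve^0)$. Choose $U:=\ov\bB_{E_\alpha}(v_*,\rho)\cap O_\alpha$ with $\rho$ small enough that $U$ is bounded away from $\p O_\alpha$, and suppose for contradiction that $v_\ve(t)\in U$ on $[0,t^+(v_\ve^0))$ for every $\ve\in(0,\ve_0]$. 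Writing $w_\ve:=v_\ve-v_*$ and projecting the variation-of-constants identity onto $E^+$ yields
$$P_+w_\ve(t)=\ve\, e^{t\bA}\phi+\int_0^t e^{(t-s)\bA}P_+G(w_\ve(s))\,ds,$$
whose first summand has $E_\alpha$-norm at least $M^{-1}\ve e^{\omega_+ t}$. Combining this with an $o(\ve e^{\omega_+ t})$-bound on the integral forces $\|P_+w_\ve(t)\|_\alpha\ge\tfrac12 M^{-1}\ve e^{\omega_+ t}$, which at the time $t_\ve:=\omega_+^{-1}\log(3M\|P_+\|_{\kL(E_\alpha)}\rho/\ve)$ strictly exceeds $\|P_+\|_{\kL(E_\alpha)}\rho$, contradicting $\|w_\ve(t_\ve)\|_\alpha\le\rho$; the complementary case $t^+(v_\ve^0)\le t_\ve$ with $v_\ve$ trapped in $U$ is excluded by the blow-up criterion Theorem~\ref{T1}(v) together with $\mathrm{dist}_{E_\alpha}(U,\p O_\alpha)>0$.

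The \emph{main obstacle} is obtaining the $o(\ve e^{\omega_+ t})$-bound on the Duhamel integral, since $\|G(w_\ve(s))\|_0\le C\|w_\ve(s)\|_1^{\,2}$ contains the ``wrong'' norm whereas the standing a priori confinement is only in $E_\alpha$. The resolution, which is the essential non-linear step mirrored from \cite[Theorem 9.1.3]{L95}, is a bootstrap on $\|w_\ve(s)\|_1$ performed on the fixed-point formulation itself. Exploiting the sharper Lipschitz estimate $\|A(v_*+w)-A(v_*)\|_{\kL(E_1,E_0)}\le L\|w\|_\beta$ granted by \eqref{a1c} (which upgrades the remainder control to $\|G(w)\|_0\le C\|w\|_\beta\|w\|_1$), the semigroup smoothing $\|e^{t\bA}\|_{\kL(E_0,E_\mu)}\le C t^{-\mu}$ for any $\mu<1$, and the fact that $\phi\in E_1$, one promotes the $E_\alpha$-confinement into a quantitative bound $\|w_\ve(s)\|_1\le C\ve e^{\omega_+ s}$ on the time interval where $\|w_\ve\|_\alpha\le\rho$; substituting this into the $P_+$-integral produces a correction of size $O(\rho\,\ve e^{\omega_+ t})$, which becomes negligible after further shrinking $\rho$, closing the argument.
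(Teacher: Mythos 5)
Your reformulation of the problem is the same as the paper's: you subtract the equilibrium, obtain $\dot w=\bA w+G(w)$ with the identical nonlinearity (your $G$ equals the paper's $F$ after using $A(v_*)v_*=f(v_*)$), and you correctly observe that \eqref{a2b} together with the perturbation theorem makes $-\bA\in\mathcal H(E_1,E_0)$ so that the spectral projection $P_+$ onto the compact spectral set $\sigma_+(\bA)$ is available with $E^+\subset E_1$. From that point on, however, you diverge: the paper does \emph{not} run a forward-in-time Duhamel/contradiction argument. It invokes \cite[Theorem 9.1.3]{L95} as a black box to produce a nontrivial \emph{backward} orbit $z\in C((-\infty,0],E_1)\cap C^1((-\infty,0],E_0)$ with $\sup_{t\le0}e^{-\omega t}\|z(t)\|_1<\infty$, picks $t_n\to-\infty$ so that $v_n^0:=v_*+z(t_n)\to v_*$ in $E_1\hookrightarrow E_\alpha$, checks via interpolation that $z(\cdot+t_n)$ is H\"older continuous into $E_\beta$ so that the uniqueness assertion of Theorem~\ref{T1} identifies it with $v(\cdot;v_n^0)$, and concludes because $v(-t_n;v_n^0)=v_*+z(0)$ is a \emph{fixed} point away from $v_*$. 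The whole difficulty you are wrestling with is thereby outsourced to Lunardi.

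The step of your argument that does not close is precisely the one you flag as the ``main obstacle'': promoting the $E_\alpha$-confinement $\|w_\ve(t)\|_\alpha\le\rho$ to a bound $\|w_\ve(s)\|_1\le C\ve e^{\omega_+ s}$. The nonlinearity genuinely requires the $E_1$-norm: the term $[A(v_*+w)-A(v_*)]w$ is only controlled by $L\|w\|_\beta\|w\|_1$ because $A(v_*+w)-A(v_*)\in\kL(E_1,E_0)$, and the Taylor remainders of $f$ and $A(\cdot)v_*$ coming from \eqref{a2a} are likewise estimated in terms of $\|w\|_1$. A Gronwall closure on $\|w_\ve(t)\|_1$ through the Duhamel formula would need $\|e^{(t-s)\bA}\|_{\kL(E_0,E_1)}$, which behaves like $(t-s)^{-1}$ near the diagonal and is \emph{not} integrable; your proposed substitute, the smoothing estimate $\|e^{t\bA}\|_{\kL(E_0,E_\mu)}\le Ct^{-\mu}$ for $\mu<1$, never reaches the $E_1$-norm that $G$ actually consumes, so the bootstrap does not produce the claimed $o(\ve e^{\omega_+t})$ bound on the $P_+$-integral. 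This is not a cosmetic issue: it is exactly the obstruction that forces Lunardi to prove instability via a backward-orbit construction (where the $P_+$-component is integrated over a finite interval with the \emph{regular} kernel $e^{\tau\bA}P_+\in\kL(E_0,E_1)$, and the $P_-$-component is handled with H\"older maximal regularity), and it is why the present paper reduces to her theorem rather than arguing forward in time. To repair your proof you would either have to import that maximal-regularity machinery (which the paper deliberately avoids assuming for $\bA$ in the stability part, but which is legitimately available here since $F\in C^{2-}(\wh O_1,E_0)$ on the $E_1$-neighborhood), or simply cite \cite[Theorem 9.1.3]{L95} and then supply the one genuinely new ingredient of the paper's proof: the interpolation estimate $\|z(t)-z(s)\|_\beta\le\|z(t)-z(s)\|_0^{1-\beta}\|z(t)-z(s)\|_1^{\beta}$ that lets the uniqueness clause of Theorem~\ref{T1} identify the backward orbit with the semiflow, so that the instability transfers from $E_1$ to $E_\alpha$. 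Your proposal omits this identification step entirely.
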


The next section contains the proof of Theorem~\ref{T1} while the proofs of Theorem~\ref{T2} and Theorem~\ref{T3} are given in Section~\ref{Sec:3}. In Section~\ref{Sec:4} we present applications of the stability result to the Muskat problem with and without surface tension.

\section{Proof of Theorem \ref{T1}}\label{Sec:2}

We present here only the proof in the case when $\K=\C$, the arguments 
in the case $\K=\R$ being identical (but require some additional clarification).

For $\omega>0$ and $\kappa\ge 1$,  let
$\mathcal{H}(E_1,E_0,\kappa,\omega)$ be the class of all $A\in\mathcal{L}(E_1,E_0)$ such that $\omega+A$ is an
isomorphism from $E_1$ onto $E_0$ and satisfies the resolvent estimates
$$
\frac{1}{\kappa}\,\le\,\frac{\|(\mu+A)z\|_{0}}{\vert\mu\vert \,\| z\|_{0}+\|z\|_{1}}\,\le \, \kappa\ ,\qquad {\rm Re\,}
\mu\ge \omega\ ,\quad z\in E_1\setminus\{0\}\ .
$$
Then $A\in\mathcal{H}(E_1,E_0,\kappa,\omega)$ implies that $A\in\mathcal{H}(E_1,E_0)$, see \cite[I.Theorem 1.2.2]{Am95}.
 On the other hand, $A\in\mathcal{H}(E_1,E_0)$ implies that there are $\omega>0$ and $\kappa\ge 1$ 
 such that $A\in\mathcal{H}(E_1,E_0,\kappa,\omega)$.\\

Let us recall that for each $A\in C^\rho(I,\mathcal{H}(E_1,E_0))$, with $\rho\in(0,1)$, there exists a unique parabolic evolution operator $U_A(t,s)$, $0\le s\le t<\sup I$, in the sense of \cite[II. Section 2]{Am95}. This enables one to reformulate the Cauchy problem \eqref{EE} as a fixed point equation of the form
\begin{equation}\label{ttp}
v(t)=U_{A(v)}(t,0)v^0+\int_0^t U_{A(v)}(t,s) f(v(s))\,\rd s\,,\quad t\in I\,,
\end{equation}
in the class of functions $v\in C^\rho(I,E_\beta)$, see \cite[II.Theorem 1.2.1]{Am95}.  
The linear theory outlined in \cite[Chapter~II]{Am95} (in particular, see \cite[II. Section 5]{Am95}) plays an important  role in the subsequent analysis.\\

The following uniform local existence and uniqueness result is  fundamental for the proof of Theorem~\ref{T1}.

\begin{prop}\label{P1}
Let \eqref{a1a}-\eqref{a1c} hold true and let $S_\alpha \subset O_\alpha$ be any compact subset of $E_\alpha$.
Then, there are a neighborhood $U_\alpha$ of $S_\alpha$ in $O_\alpha$ and $T:=T(S_\alpha)>0$ such that, for each $v^0\in U_\alpha$,
the problem \eqref{EE}
has a classical solution 
\[
v=v(\cdot;v^0)\in C^1((0,T],E_0)\cap C((0,T],E_1)\cap C ([0,T],O_\alpha) \cap C^{\alpha-\eta}([0,T],E_\eta)\,, \quad \eta\in[0,\alpha].
\]
Moreover, there is a constant $c_0:=c_0(S_\alpha)>0$ such that
$$
\| v(t;v^0)-v(t;v^1)\|_\alpha\le c_0 \| v^0-v^1\|_\alpha\,,\qquad 0\le t\le T\,,\quad v^0, v^1\in U_\alpha\,.
$$
Finally, if $\vartheta\in(0,1)$ and $\wt v\in C^1((0,T],E_0)\cap C((0,T],E_1)\cap C^\vartheta ([0,T],O_\beta)$ with $\wt v (0)=v^0 \in U_\alpha$
solves  \eqref{EE} pointwise, then $\wt v=v(\cdot;v^0)$.
\end{prop}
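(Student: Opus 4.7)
The plan is to recast \eqref{EE} as the fixed-point equation \eqref{ttp} and to run a Banach contraction argument in a Hölder-in-time function space taking values in $E_\beta$, exploiting the parabolic evolution operator machinery of \cite{Am95} together with the standing assumptions \eqref{a1a}-\eqref{a1c}. The overall organization follows the scheme of \cite{Am88,Am93}, but with the additional uniformity over the compact set $S_\alpha$ kept explicit.

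\textbf{Uniform freezing.} Because $S_\alpha \subset O_\alpha \hookrightarrow O_\beta$ is compact in $E_\alpha$ and hence in $E_\beta$, continuity of $A:O_\beta \to \mathcal{H}(E_1,E_0)$ yields an open neighborhood $V_\beta$ of $S_\alpha$ in $O_\beta$ and constants $\kappa \geq 1$, $\omega > 0$ such that $A(w) \in \mathcal{H}(E_1,E_0,\kappa,\omega)$ for every $w \in V_\beta$, with a uniform Lipschitz constant $L_A$ for $A$ and $L_f$ for $f$ on $V_\beta$, and a uniform bound on $\|f(w)\|_\gamma$. Fix $\rho \in (0,\alpha-\beta]$ (so that $\rho < 1$ and time-Hölder estimates at the $E_\beta$-level are compatible with the $E_\alpha$-regularity of the initial data). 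Pick a neighborhood $U_\alpha$ of $S_\alpha$ in $O_\alpha$ whose closure in $E_\alpha$ is bounded and lies well inside $V_\beta$.

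\textbf{Fixed-point setup.} For $T>0$ and $R>0$ to be determined, and for each $v^0 \in U_\alpha$, consider the closed metric subset
\[
\Sigma_{T,R}(v^0):=\bigl\{w \in C^\rho([0,T],E_\beta):\ w(0)=v^0,\ \|w-v^0\|_{C^\rho([0,T],E_\beta)}\le R,\ w([0,T]) \subset V_\beta\bigr\}
\]
of $C^\rho([0,T],E_\beta)$. Each $w \in \Sigma_{T,R}(v^0)$ satisfies $A(w) \in C^\rho([0,T],\mathcal{H}(E_1,E_0,\kappa,\omega))$ by the Lipschitz continuity of $A$, so \cite[II. Theorem 2.1.2, II. Section 5]{Am95} produces the parabolic evolution operator $U_{A(w)}(t,s)$ with the standard estimates
\[
\|U_{A(w)}(t,s)\|_{\mathcal{L}(E_\theta,E_\eta)} \leq C(t-s)^{-(\eta-\theta)},\qquad 0\le\theta\le\eta\le 1,
\]
and the Hölder-in-time bounds $\|U_{A(w)}(t,0)v^0-v^0\|_\eta \le C t^{\alpha-\eta}\|v^0\|_\alpha$ for $\eta\in[0,\alpha]$, with constants depending only on $\kappa,\omega$ and on the $C^\rho$-norm of $A\circ w$. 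Define
\[
\Phi(w)(t):=U_{A(w)}(t,0)v^0+\int_0^t U_{A(w)}(t,s) f(w(s))\,\rd s,\qquad t\in[0,T].
\]

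\textbf{Self-mapping and contraction.} First I would show $\Phi(\Sigma_{T,R}(v^0))\subset \Sigma_{T,R}(v^0)$: combining the evolution-operator estimates above with the uniform bound $\|f(w)\|_\gamma \le M$, one obtains
\[
\|\Phi(w)(t)-v^0\|_\beta \le C\bigl(t^{\alpha-\beta}\|v^0\|_\alpha + t^{1-(\beta-\gamma)}M\bigr),
\]
and an analogous Hölder-seminorm estimate in $E_\beta$. Choosing first $R$ and then $T$ small enough (depending only on $S_\alpha$, not on the particular $v^0 \in U_\alpha$) renders the right-hand side $\le R$, and keeps the image inside $V_\beta$ by continuity. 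For contraction, Lipschitz dependence of $U_{A(\cdot)}$ on its generator (as in \cite[II. Section 5]{Am95}) together with $L_A,L_f$ yields
\[
\|\Phi(w_1)-\Phi(w_2)\|_{C^\rho([0,T],E_\beta)} \leq C(T) \|w_1-w_2\|_{C^\rho([0,T],E_\beta)},\qquad C(T) \to 0\ \text{as}\ T \to 0.
\]
Shrinking $T$ once more yields a contraction; the Banach fixed-point theorem produces $v=v(\cdot;v^0)\in\Sigma_{T,R}(v^0)$.

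\textbf{Continuous dependence, regularity, uniqueness.} To pass from continuous dependence in $C^\rho([0,T],E_\beta)$ to the $E_\alpha$-bound claimed, I would write $v(t;v^0)-v(t;v^1)=U_{A(v(\cdot;v^0))}(t,0)(v^0-v^1)+(\text{remainder})$, absorb the remainder using the same Lipschitz estimates and the already established contraction constant, and use $\|U_A(t,0)\|_{\mathcal{L}(E_\alpha)}\le C$ uniformly in $t\in[0,T]$. Higher regularity $v\in C^1((0,T],E_0)\cap C((0,T],E_1) \cap C^{\alpha-\eta}([0,T],E_\eta)$ then follows by applying \cite[II. Theorem 1.2.1]{Am95} to the linear nonautonomous equation $\dot v + A(v(t))v = f(v(t))$ once $A\circ v \in C^\rho([0,T],\mathcal{H}(E_1,E_0))$ is known. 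For uniqueness, any $\widetilde v \in C^1((0,T],E_0)\cap C((0,T],E_1) \cap C^\vartheta([0,T],O_\beta)$ with $\widetilde v(0)=v^0 \in U_\alpha$ solves the integral equation \eqref{ttp} by \cite[II. Theorem 1.2.1]{Am95}; provided $T$ is small enough, $\widetilde v$ lies in a space of the form $\Sigma_{T,R'}(v^0)$ (after possibly enlarging $R$ and re-running the contraction with $\rho$ replaced by $\min\{\rho,\vartheta\}$), so the contraction forces $\widetilde v = v(\cdot;v^0)$.

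\textbf{Main obstacle.} The delicate point is juggling three different regularity scales simultaneously: the $E_\alpha$-level of $v^0$, the $E_\beta$-level at which the nonlinearities $A,f$ are Lipschitz and where the fixed point is constructed, and the $E_\gamma$-level on which $f$ takes values; the choice of Hölder exponent $\rho$ and of the ball $\Sigma_{T,R}(v^0)$ must be compatible with all of the Amann-type estimates for $U_{A(w)}$ (in particular the Lipschitz dependence on the generator $A(w)$), while the smallness of $T,R$ must be extractable uniformly in $v^0\in U_\alpha$ from compactness of $S_\alpha$. Once the correct bookkeeping is set up, the contraction and continuous-dependence estimates are standard.
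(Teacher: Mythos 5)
Your overall strategy --- uniform constants on a neighborhood of the compact set $S_\alpha$ obtained by compactness, reformulation as the fixed point equation \eqref{ttp}, a contraction argument in a H\"older-in-time class with values in $E_\beta$, Lipschitz dependence on the data via the linear estimates, and uniqueness by showing that any $C^\vartheta([0,T],O_\beta)$-solution is itself a fixed point --- coincides with the paper's. There is, however, one genuine gap: you run the contraction in the norm of $C^\rho([0,T],E_\beta)$ and assert $\|\Phi(w_1)-\Phi(w_2)\|_{C^\rho([0,T],E_\beta)}\le C(T)\|w_1-w_2\|_{C^\rho([0,T],E_\beta)}$ with $C(T)\to 0$. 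This estimate is not available for the term $\bigl[U_{A(w_1)}(t,0)-U_{A(w_2)}(t,0)\bigr]v^0$: the perturbation results of \cite[II.Theorem 5.2.1]{Am95} control its supremum in $E_\beta$ by $c\,T^{\alpha-\beta}\|A(w_1)-A(w_2)\|_{C([0,T],\kL(E_1,E_0))}\|v^0\|_\alpha$, but its $\rho$-H\"older seminorm in time is only known to be \emph{bounded} (each evolution operator separately satisfies $\|[U(t,0)-U(s,0)]v^0\|_\beta\le c(t-s)^{\alpha-\beta}\|v^0\|_\alpha$), not small in terms of $w_1-w_2$; interpolating the two bounds yields at best a sublinear dependence of order $\|w_1-w_2\|_{C([0,T],E_\beta)}^{1-\rho/(\alpha-\beta)}$, which does not give a Banach contraction in the H\"older norm. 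This is precisely why the paper takes $\mathcal{V}_T$ to consist of functions with a \emph{fixed} H\"older constant --- so that the set is closed, hence complete, in $C([0,T],E_\beta)$ --- and performs the contraction in the weaker sup-norm, for which the smallness factor $c_1e^{\nu}(1+\|v^0\|_\alpha+b)T^{\alpha-\beta}$ in \eqref{po} is genuinely available; the H\"older bound is needed only for the self-map property and comes from \eqref{lip} with the strict choice $\rho<\alpha-\beta$. (Your endpoint option $\rho=\alpha-\beta$ would also prevent making the H\"older seminorm of the image small by shrinking $T$, though this could be compensated by enlarging $R$.)

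A second, smaller point: in the continuous-dependence step the ``remainder'' you propose to absorb contains $\|A(v(\cdot;v^0))-A(v(\cdot;v^1))\|_{C([0,T],\kL(E_1,E_0))}$, which is controlled by $\|v(\cdot;v^0)-v(\cdot;v^1)\|_{C([0,T],E_\beta)}$ rather than directly by $\|v^0-v^1\|_\alpha$. One therefore needs the two-step bootstrap of the paper: first close the estimate \eqref{p} at the level $\mu=\beta$ to obtain $\|v(\cdot;v^0)-v(\cdot;v^1)\|_{C([0,T],E_\beta)}\le c\|v^0-v^1\|_\beta$, and only then insert this into the estimate with $\mu=\alpha$. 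Your sketch leaves this order of operations implicit. Apart from these two points the proposal reproduces the paper's argument.
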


\begin{proof}  
{  Let $S_\alpha \subset O_\alpha$ be a compact subset of $E_\alpha$.
Since   $S_\alpha \subset O_\beta$  is also compact in $E_\beta$, there is 
$\delta>0$ such that  $\mathrm{dist}_{E_\beta}(S_\alpha, \partial O_\beta) > 2\delta > 0$.
Furthermore, 
$A$ as well as $f$ are uniformly Lipschitz continuous on some neighborhood 
of $S_\alpha$ (see \cite[II.Proposition 6.4]{A83}), 
that is, there are $\ve > 0$ and $L>0$ such that  $\ov\bB_{E_\beta}(S_\alpha, 2\ve) 
\subset \mathbb{B}_{E_\beta}(S_\alpha, \delta) \subset O_\beta$\footnote{Given $\theta\in(0,1)$, $S\subset E_\theta$, and $\delta>0$, 
we denote by $\mathbb{B}_{E_\theta}(S, \delta)$
the set $\{x\in E_\theta\,:\, \mathrm{dist}_{E_\theta}\big(x,S\big)<\delta\}$.} and
\bqn\label{est}
\|A(x) - A(y)\|_{\mathcal{L}(E_1,E_0)}+ \|f(x) - f(y)\|_\gamma \le L\|x-y\|_\beta\ , \quad x,y \in 
\ov\bB_{E_\beta}(S_\alpha, 2\ve)\ . 
\eqn
Moreover, since $A(S_\alpha)$ is compact in $\mathcal{H}(E_1,E_0)$, it follows 
from \cite[I.Corollary 1.3.2]{Am95} that there are $\kappa\ge 1$ and $\omega>0$ such that we may assume
without loss of generality (by making $\ve>0$ smaller, if necessary) that 
\bqn\label{est1}
A(x)\in \mathcal{H}(E_1,E_0,\kappa,\omega)\,,\quad x \in \ov\bB_{E_\beta}(S_\alpha, 2\ve)\,.
\eqn
Also note from \eqref{est} that there is $b > 0$ with
\bqn\label{est3}
\|f(x)\|_0+\|f(x)\|_\gamma \le b\ , \quad x \in \ov\bB_{E_\beta}(S_\alpha, 2\ve)\, . 
\eqn
Fix $\rho\in (0,\alpha-\beta)$. Given $T \in (0,1)$ we introduce 
$$\mathcal{V}_T := \big\{v \in C([0,T], \ov\bB_{E_\beta}(S_\alpha, 2\ve)) \,:\, \|v(t) - v(s)\|_\beta 
\le |t-s|^{\rho}\ , ~0 \le s,t \le T\big\}$$
and observe that this set is closed in $C([0,T], E_\beta)$, hence complete. Thus, if $v\in\mathcal{V}_T$, then 
\bqn\label{lp}
A(v(t))\in \mathcal{H}(E_1,E_0,\kappa,\omega)\,,\quad t\in [0,T]\,,
\eqn
and
\bqn\label{lq}
A(v)\in C^\rho\big([0,T],\mathcal{L}(E_1,E_0)\big)\quad\text{with}\quad \sup_{0\le s<t\le T}
\frac{\|  A(v(t))-  A(v(s))\|_{\mathcal{L}(E_1,E_0)}}{( t-s)^\rho}\le L\,.
\eqn 
In particular, for each $v\in\mathcal{V}_T$, the evolution operator
$$
U_{A(v)}(t,s)\,,\quad 0\le s\le t\le T\,,
$$
generated by $ A(v)\in C^\rho\big([0,T],\mathcal{H}(E_1,E_0)\big)$ is well-defined, and \eqref{lp}-\eqref{lq} guarantee that we are 
in a position to use the results of \cite[II.Section 5]{Am95}.

Let $c_{\alpha,\beta}$ be the norm of the embedding $E_\alpha\hookrightarrow E_\beta$. 
 Then
$U_\alpha:=\mathbb{B}_{E_\alpha}(S_\alpha,\ve/(1+c_{\alpha,\beta}))\subset O_\alpha.$
Given $v^0\in U_\alpha,$ define
\begin{equation}\label{Lambda}
\Lambda(v)(t):= U_{A(v)}(t,0)v^0 +\int_0^t U_{A(v)}(t,s) f(v(s))\,\rd s\,,\quad t\in [0,T]\,,\quad v\in\mathcal{V}_T\,.
\end{equation}
Then, according to \cite[II.Theorem 5.3.1]{Am95}, there are   constants $c>0$ and $\nu\ge 0$ (possibly depending on $\kappa$, $\omega$, $L$,
and $\rho$, but not on $T$) such that, for $v\in\mathcal{V}_T$ and $0\le s\le t\le  T\le 1$, we have, using \eqref{est3},
\begin{equation}\label{lip}
\begin{aligned}
\|\Lambda(v)(t)-\Lambda(v)(s)\|_\beta &\le c e^{\nu t}\big( \|v^0\|_\alpha +\|f(v)\|_{L_\infty((0,t),E_0)}\big) (t-s)^{\alpha-\beta} \\
&\le c e^{\nu}T^{\alpha-\beta-\rho} \big( \|v^0\|_\alpha +b\big) (t-s)^\rho  
\end{aligned}
\end{equation}
and, in particular with $\Lambda(v)(0)=v^0$,
\begin{equation*}
\begin{split}
\|\Lambda(v)(t)-v^0\|_\beta &\le c e^{\nu} \big( \|v^0\|_\alpha +b\big) T^{\alpha-\beta}\,.
\end{split}
\end{equation*}
Furthermore, \cite[II.Theorem 5.2.1]{Am95} and \eqref{est} imply,
for $v,w\in\mathcal{V}_T$ and $0\le s\le t\le T\le 1$, that
\begin{align}
\|\Lambda(v)(t)&-\Lambda(w)(t)\|_\beta \nonumber\\
 &\le c e^{\nu t}\left\{T^{\alpha-\beta} \| A(v)-A(w)\|_{C([0,T],\mathcal{L}(E_1,E_0)}\left(\|v^0\|_\alpha + T^{1-\alpha+\gamma}
 \|f(v)\|_{C([0,T],E_\gamma)}\right)\right. \nonumber\\
&\hspace{3cm} \left. +T^{1-\beta} \|f(v)-f(w)\|_{C([0,T],E_0)}\right\} \nonumber\\
&\le c_1 e^{\nu} \big(1+ \|v^0\|_\alpha + b\big) T^{\alpha-\beta} \|v-w\|_{C([0,T],E_\beta)}\,.\label{po}
\end{align}
Consequently, there is $T:=T(S_\alpha)\in (0,1)$ such that, for $v^0\in U_\alpha $, the mapping
\mbox{$\Lambda:\mathcal{V}_T\rightarrow \mathcal{V}_T$} is a contraction and thus possesses a unique fixed point 
$v(\cdot;v^0)\in \mathcal{V}_T$. The linear theory in \cite[II.Theorems 1.2.1  and 5.3.1]{Am95} implies that $v(\cdot;v^0)$ 
is a solution to \eqref{EE} with regularity properties as claimed. This proves the existence statement. 

Next, to prove Lipschitz continuity with respect to the initial values, let $v^0, v^1\in U_\alpha$.
Then we derive from  \cite[II.Theorem 5.2.1]{Am95} for $\mu\in \{\beta,\alpha\}$ and $t\in [0,T]$, by using \eqref{est} and \eqref{est3}, that
\begin{align}
&\|v(t;v^0)-v(t;v^1)\|_\mu \nonumber\\
&\leq ce^{\nu T}\left\{T^{\alpha-\mu}\|A(v(\cdot; v^0))-A(v(\cdot; v^1))\|_{C([0,T],\kL(E_1,E_0))}\left[\|v_0\|_\alpha
+T^{1-\alpha+\gamma}\|f(v(\cdot;v^0))\|_{L_\infty((0,t),E_\gamma)}\right]\right.\nonumber\\
&\left. \qquad\qquad +\|v^0-v^1\|_\mu+T^{1-\mu}\|f(v(\cdot;v^0))-f(v(\cdot;v^1))\|_{L_\infty((0,T),E_0)}\right\}\nonumber\\
&\leq c\big(\|v^0-v^1\|_\mu+ T^{\alpha-\mu}\|v(\cdot;v^0)-v(\cdot;v^1)\|_{C([0,T],E_\beta)}\big)\,.\label{p}
\end{align}
Taking first $\mu=\beta$ in \eqref{p} and making $T>0$ smaller, if necessary, we obtain
$$
\|v(\cdot;v^0)-v(\cdot;v^1)\|_{C([0,T],E_\beta)}\le c  \|v^0-v^1\|_\beta
$$
and, using this and then \eqref{p} with $\mu=\alpha$, we deduce that indeed
$$
\|v(\cdot;v^0)-v(\cdot;v^1)\|_{C([0,T],E_\alpha)}\leq c_0 \|v^0-v^1\|_\alpha \,,\quad v^0, v^1\in U_\alpha\,,
$$
for some constant $c_0=c_0(S_\alpha)>0$.
}

To prove uniqueness, let $\wt v\in C^\vartheta([0,T],O_\beta)$ solve \eqref{EE}  with $\wt v(0)=v^0\in U_\alpha$.
Making $\rho\in (0,\alpha-\beta)$ smaller than $\vartheta$ and choosing $T$ smaller, if necessary, 
we may assume that $\wt v\in \mathcal{V}_T$. As a solution to \eqref{EE}, $\wt v$ is a fixed point of $\Lambda$, 
hence  $\wt v=v(\cdot;v^0)$ by what was shown above.

\end{proof}

\begin{rem}\label{R1}
 Suppose that the assumption $f\in C^{1-}(O_\beta,E_\gamma)$ in \eqref{a1c} is replaced by the assumption that the function \mbox{$f:O_\beta\rightarrow E_\gamma$} is bounded on bounded sets
  and that $f\in C^{1-}(O_\beta,E_0)$. Then $$\|f(x) - f(y)\|_0 \le L\|x-y\|_\beta\,,\quad x,y \in \ov{\mathbb{B}}_{E_\beta}(S_\alpha, 2\ve)\,,$$ in \eqref{est} while \eqref{est3} still holds. This suffices for \eqref{po} while the rest of the proof of Theorem~\ref{T1} remains the same.
\end{rem} 
 
We now provide the proof of Theorem \ref{T1}.

\begin{proof}[Proof of Theorem \ref {T1}]
 Suppose that \eqref{a1a}-\eqref{a1c} hold. We divide the proof in four parts.\\

 \noindent {\em Existence and Uniqueness:} 
Given $v^0\in O_\alpha=O_\beta\cap E_\alpha$ it readily follows from Proposition~\ref{P1} that the problem \eqref{EE}
admits a unique local solution which, by the uniqueness assertion of Proposition~\ref{P1}, can be extended to 
a maximal solution $v(\cdot;v^0)$ on the maximal interval of existence $[0,t^+(v^0))$ with regularity properties as stated in \eqref{a1b}. 
This proves part (i) and (ii) from Theorem~\ref{T1}.\\

 

 \noindent{\em Continuous dependence:} Let $v^0\in O_\alpha$ and $t_0\in (0,t^+(v^0))$ be arbitrary.
  Then $S_\alpha:=v([0,t_0];v^0)\subset O_\alpha$ is compact. Hence, according to Proposition~\ref{P1},
 there are $\ve>0$, $T>0$, and $c_0\ge 1$  such that $T<t^+(v^1)$ for any
 $v^1\in U_\alpha=\mathbb{B}_{E_\alpha}(S_\alpha,\ve/(1+c_{\alpha,\beta}))$ and
  \begin{align}\label{FE2}
\|v(t;v^1)-v(t; v^2)\|_{\alpha}\leq c_0 \|v^1- v^2\|_\alpha\,,\quad 0\le t\le T\,,\quad v^1, v^2\in U_\alpha\,.
\end{align}
Choose $N\ge 1$ with $(N-1)T<t_0\le NT$ and set $V_\alpha:=\mathbb{B}_{E_\alpha}(v^0,\ve_0)$ for 
  $\ve_0:=\ve/((1+c_{\alpha,\beta})c_0^{N-1})$, 
which is a neighborhood of $v^0$ contained in $U_\alpha$. We now claim that
there exists $k_0\geq1$ such that
 \begin{itemize}
  \item[(a)] $t_0<t^+(v^1)$ for each $v^1\in V_\alpha$,
  \item[(b)] $\|v(t;v^1)-v(t;v^0)\|_{E_\alpha} \leq k_0 \|v^1- v^0\|_\alpha$ for $0\le t\le t_0$ and $v^1\in V_\alpha$.
 \end{itemize}
Indeed, let $v^1\in V_\alpha$. If $t_0\le T$, this is exactly what is stated above. 
If otherwise $T<t_0$ we have $v(T;v^0)\in S_\alpha$ so that the estimate
$$
\|v(t;v^1)-v(t; v^0)\|_{\alpha}\leq c_0 \|v^1- v^0\|_\alpha <\ve/(1+c_{\alpha,\beta})\,,\quad 0\leq t\leq T\,,
$$
implied by \eqref{FE2} yields $v(T;v^1)\in U_\alpha$. Hence $T<t^+(v(T;v^i))$ for $i=0,1$ while uniqueness
of solutions to \eqref{EE} entails that $v(t;v(T;v^i))=v(t+T;v^i)$, $0\le t\le T$.
Thus, \eqref{FE2} shows that
$$
\|v(t+T;v^1)-v(t+T; v^0)\|_{\alpha}\leq c_0^2 \|v^1- v^0\|_\alpha \,,\quad 0\le t\le T\,.
$$
If  $N=2$ we are done. Otherwise we proceed to deduce (a) and (b) after finitely many iterations. From properties (a) and (b) it is immediate that the solution map defines a semiflow in $O_\alpha$.\\

 

 \noindent{\em Global existence:} 
Suppose that $t^+(v^0)<\infty$ and that  $S_\alpha:=cl_{E_\alpha}v([0,t^+(v^0));v^0)\subset O_\alpha$ is compact.
Then Proposition~\ref{P1} ensures the existence of $T>0$ in dependence of $S_\alpha$ such that any solution with initial value in
$S_\alpha$ exists at least on $[0,T]$. Choosing $v(t_0;v^0)$ as initial value with $ t^+(v^0)-t_0 <T$ yields a contradiction.
This proves part (iv) of Theorem~\ref{T1}.\\

 \noindent{\em Blow-up criterion:} Consider $v^0\in O_\alpha$ with $t^+(v^0)<\infty$.

To prove ($\alpha$) from part (v) of Theorem~\ref{T1} let $v(\cdot;v^0):[0,t^+(v^0))\to E_\alpha$ be uniformly continuous and assume that \eqref{1a} is not true. Then $\lim_{t\nearrow t^+(v^0)}v(t;v^0)$ exists in $O_\alpha$, hence $v([0,t^+(v^0));v^0)$ is relatively compact in $O_\alpha$. This contradicts (iv) of Theorem~\ref{T1}.

To prove ($\beta$) from part (v) of Theorem~\ref{T1} let $E_1$ be compactly embedded in $E_0$ and assume that \eqref{1} is not true. 
Since we only assumed that $\beta<\alpha$ in the existence argument, we may assume that $\eta>\alpha$. Since then $E_{\eta}$
embeds compactly in $E_\alpha$, it  follows that $v([0,t^+(v^0));v^0)$ is relatively compact in $O_\alpha$. This again contradicts (iv) of Theorem~\ref{T1}.

 
\end{proof}

\section{Proof of Theorem~\ref{T2} and Theorem~\ref{T3}}\label{Sec:3}

We first establish the exponential stability result stated in Theorem \ref{T2}.

\begin{proof}[Proof of Theorem \ref{T2}]
 Suppose that \eqref{a1a}-\eqref{a1c} and \eqref{a1e}-\eqref{a1g} hold.
 Given   $0<\xi<\zeta<1$, let $c_{\zeta,\xi} $ denote the norm of the continuous embedding  
$E_\zeta\hookrightarrow E_\xi$. Let further $k_0\geq1$ denote the constant from $(b)$
in the   the proof of the continuous dependence claim  of
Theorem \ref{T1} (for $v^0=v_*$ and $t_0=1$ there). We now fix $\eta\in (\beta,\alpha)$ 
and define $c_1:=(1+c_{\alpha,\beta})(1+c_{\eta,\beta})(1+c_{\alpha,\eta}) k_0$.
 Since $O_\beta$ is an open subset of $E_\beta,$ there exists $\ve>0$ such that $\ov\bB_{E_\alpha}(v_*,3\ve/c_1\big)
 \subset \ov\bB_{E_\beta}(v_*,3\ve\big)\subset O_\beta$. 
 Thus, given any $u^0\in \ov\bB_{E_\alpha}(0,3\ve/c_1\big)$ the evolution problem \eqref{EE}
 with $v^0:=u^0+v_*$ has a unique maximal classical solution $v(\cdot;v^0)$  
  on $[0,t^+(v^0))$  with properties as stated in Theorem~\ref{T1}. This implies in particular that 
\begin{equation}\label{3}
u:=v(\cdot;v^0)-v_* \in C^{\alpha-\eta}([0,t^+(v^0)),E_\eta)\,,\quad  \eta\in [0,\alpha]\,,
\end{equation}
is a classical solution to the equation
\begin{equation}\label{4}
\dot u+\wh A(u)u=\wh f(u)\,,\quad t>0\,,\qquad u(0)=u^0\,,
\end{equation}
where we defined, for $w\in \wh O_\beta:=O_\beta-v_*$,
$$
\wh A(w):=A(w+v_*)-\p f(v_*) + (\p A(v_*)[\cdot])v_*
$$
and
$$
\wh f(w):= f(w+v_*)-A(w+v_*)v_*-\p f(v_*)w + (\p A(v_*)[w])v_*\,.
$$
 Observe that 
\begin{equation}\label{5}
\wh A\in C^{1-}\big(\wh O_\beta,\mathcal{H}(E_1,E_0)\big)\,,
\end{equation}
since $(\p A(v_*)[\cdot])v_*-\p f(v_*)\in\mathcal{L}(E_\beta,E_0)$ may be considered as perturbation due to \cite[I.Theorem 1.3.1]{Am95}.
Also note that, given any $\xi\in(0,1)$, we may assume due to \eqref{a1e} (after making $\ve>0$ smaller, if necessary) that
\begin{equation}\label{10b}
\|\wh f(w)\|_0\le\xi \|w\|_\beta\,,\quad w\in \ov\bB_{E_\beta}(0,2\ve)\,.
\end{equation}
We now show that zero is an exponentially asymptotically stable equilibrium to \eqref{4} in the $E_\alpha$-topology.
For this, let $ \omega\in (0,\omega_0)$ be arbitrary and set $4\delta:=\omega_0-\omega>0$ and 
$\rho:=\alpha-\eta>0$. 
We then obtain from \cite[I.Proposition 1.4.2]{Am95} and \eqref{5} 
that there are $\kappa\ge 1$ and $L>0$ such that (making again $\ve>0$ smaller, if necessary)
\begin{equation}\label{7a}
-\omega_0+\delta + \wh A(w)\in \mathcal{H}(E_1,E_0,\kappa,\delta)\,,\quad w\in\ov\bB_{E_\beta}(0,2\ve)\,,
\end{equation}
and
\begin{equation}\label{7b}
\|\wh A(w_1)-\wh A(w_2)\|_{\mathcal{L}(E_1,E_0)}\le L \| w_1-w_2\|_\beta\,,\quad w_1,w_2\in \ov\bB_{E_\beta}(0,2\ve)\,.
\end{equation}
We then denote by $c_0(\rho)>0$ the constant from \cite[II.Theorem 5.1.1]{Am95} and choose $N>0$ such that $c_0(\rho)N^{1/\rho}=\delta$. 
Given $T\in(0,\infty)$, we introduce
$$
\mathcal{M}(T):=\left\{ w\in C\big([0, T],\ov\bB_{ E_\eta}(0,2\ve/c_{\eta,\beta})\big)\,:\, \|w(t)-w(s)\|_\eta
\le \frac{N}{L c_{\eta,\beta}}\vert t-s\vert^\rho\,,\, 0\le s,t\le T\right\}\,.
$$
Note that $\ov\bB_{E_\eta}(0,2\ve/ c_{\eta,\beta})\subset \ov\bB_{E_\beta}(0, 2\ve)$. 
We then derive from \eqref{7a}-\eqref{7b} for $w\in\mathcal{M}(T)$ that
\bqn\label{b1}
-\omega_0+\delta + \wh A(w(t))\in \mathcal{H}(E_1,E_0,\kappa,\delta)\,,\quad t\in [0,T]\,,
\eqn
and
\bqn\label{b2}
\wh A(w)\in C^\rho\big([0,T],\mathcal{L}(E_1,E_0)\big)
\quad\text{with}\quad \sup_{0\le s<t\le T} \frac{\|\wh A(w(t))-\wh A(w(s))\|_{\mathcal{L}(E_1,E_0)}}{(t-s)^\rho}\le N\,.
\eqn
Owing to the statements (a) and (b) in the proof of   
Theorem \ref{T1} (with $v^0=v_*$ and $t_0=1$ there)  we may assume,
again after making $\ve>0$ smaller, that    $t^+(v^0)\geq 1$ and
$\|u(t)\|_\eta\leq  \ve/c_{\eta,\beta}$ for all $t\in[0,1]$ and all  $u^0\in\ov\bB_{E_\alpha}(0,\ve/c_{1}\big)$. 
Choosing $S_\alpha:=\{v_*\}$ in Proposition \ref{P1} and making  $\ve$ smaller such that
$\ov\bB_{E_\alpha}(v_*,\ve/c_{1}\big) \subset U_\alpha$,
it follows then from the proof of Proposition \ref{P1} (see \eqref{lip}) that there exist $0<t_0\leq 1$  and $c_2>0$ such that 
\[
\|u(t)-u(s)\|_\beta=\|v(t;v^0)-v(s;v^0)\|_\beta\leq c_2(t-s)^{\rho} ,\quad
0\leq s\leq t\leq t_0\,,\quad u^0\in \ov \bB_{E_\alpha}(0,\ve/c_{1}\big)\,.
\]
Together with \eqref{7a}-\eqref{7b}  we deduce that 
\[
\wh A(u)\in C^\rho\big([0,t_0],\mathcal{L}(E_1,E_0)\big)
\quad\text{with}\quad \sup_{0\le s<t\le t_0} \frac{\|\wh A(u(t))-\wh A(u(s))\|_{\mathcal{L}(E_1,E_0)}}{(t-s)^\rho}\le Lc_2\, 
\]
and
\[
-\omega_0+\delta + \wh A(u(t))\in \mathcal{H}(E_1,E_0,\kappa,\delta)\,,\quad t\in [0,t_0]\,, 
\]
for all $u^0\in\ov \bB_{E_\alpha}(0,\ve/c_{1}\big)$. Now,
\cite[II.Remark 2.1.2, II.Theorem 5.3.1]{Am95} along with \eqref{10b} yield that there is $c_3>0$
such that 
\begin{equation*} 
\begin{split}
\|u(t)-u(s)\|_{\eta} &\le c_3 \big( \|u^0\|_\alpha +\|\wh f(u)\|_{L_\infty((0,t),E_0)}\big) (t-s)^\rho\le c_3\left( \frac{\ve}{c_1} + \ve\xi\right) (t-s)^\rho
 \end{split}
\end{equation*}
for $0\leq s\leq t\leq t_0$, so that, after making  $\ve$ smaller again,  we may assume that $u\vert_{[0,t_0]}\in \mathcal{M}(t_0)$
for all  $u^0\in\ov \bB_{E_\alpha}(0,\ve/c_{1}\big).$

Set $t_1:=\sup\{t<t^+(v^0)\,;\, u\vert_{[0,t]}\in\mathcal{M}(t) \}\ge t_0$.
Then, it follows from \eqref{10b}, \eqref{b1}-\eqref{b2} and
 \cite[II.Theorem 5.3.1]{Am95} that there is a constant $c>0$  such that
\begin{equation*} 
\begin{split}
\|u(t)-u(s)\|_{\eta} &\le ce^{\nu t}\big( \|u^0\|_\alpha +\|\wh f(u)\|_{L_\infty((0,t),E_0)}\big) 
(t-s)^\rho\le ce^{\nu t}\big( \|u^0\|_\alpha + \ve\xi\big) (t-s)^\rho
\end{split}
\end{equation*}
for $0\le s\le t<t_1$  and $u^0\in\ov \bB_{E_\alpha}(0,\ve/c_{1}\big)$, where
$$
\nu:= c_0(\rho)N^{1/\rho}-\omega_0+\delta+\delta=-\omega-\delta <-\omega<0\,.
$$
In particular,
\begin{equation*} 
\begin{split}
\|u(t)\|_{ \eta} &\le \|u^0\|_{\eta} +c\big( \|u^0\|_\alpha  +  \ve\xi\big) \left(\sup_{\tau \ge 0} e^{\nu \tau}\tau^\rho\right)
\end{split}
\end{equation*}
for $0\le t<t_1$. Therefore, choosing $\xi>0$ and $\ve>0$ sufficiently small, we see that there is $\ve_0\in (0,\ve)$ such that,  
if  $u^0\in \ov\bB_{E_\alpha}(0,\ve_0)$, then
$$
\|u(t)\|_{\eta}\le \ve/c_{\eta,\beta}\,,\quad \|u(t)-u(s)\|_{ \eta} \le \frac{N}{2Lc_{\eta,\beta}} (t-s)^\rho\,,\qquad 0\le s\le t< t_1\,,
$$
with $\eta\in (\beta,\alpha)$, hence $t_1=t^+(v^0) =\infty$ in view of Theorem~\ref{T1} (v).

To sum up we have shown that, given any $ \omega\in (0,\omega_0)$, there exist  $\ve>0$ and $\ve_0\in (0,\ve)$ such that
$$
u(t)\in\ov\bB_{E_{\beta}}(0,2\ve)\subset O_{\eta} \,,\quad \|u(t)-u(s)\|_{\beta} \le \frac{N}{L} (t-s)^{\alpha-\beta}\,,\qquad t,s\ge 0\,,
$$
whenever $u^0\in \ov\bB_{E_\alpha}(0,\ve_0)$. It now follows from \eqref{7a}-\eqref{7b} and \cite[II.Lemma 5.1.3]{Am95} that
\begin{equation*}\label{88}
\| U_{\wh A(u)}(t,s)\|_{\mathcal{L}(E_\alpha)}+(t-s)^\alpha \| U_{\wh A(u)}(t,s)\|_{\mathcal{L}(E_0,E_\alpha)} 
\le c e^{\nu (t-s)}\,,\quad 0\leq s<t\,.
\end{equation*}
From this, \cite[II.Remarks 2.1.2]{Am95} and \eqref{10b} we readily obtain that
\begin{equation*}
\begin{split}
e^{\omega t}\|u(t)\|_\alpha &\le e^{\omega t}\| U_{\wh A(u)}(t,0)\|_{\mathcal{L}(E_\alpha)} \|u^0\|_\alpha 
+\int_0^t e^{\omega t}\| U_{\wh A(u)}(t,s)\|_{\mathcal{L}(E_0,E_\alpha)} \| \wh f(u(s))\|_0 \,\rd s\\
&\le c  e^{-\delta t} \|u^0\|_\alpha 
+ \xi c c_{\alpha,\beta}\int_0^t e^{\omega(t-s)} e^{\nu(t-s)}(t-s)^{-\alpha} e^{\omega s} \|u(s)\|_\alpha\,\rd s\,,
\end{split}
\end{equation*}
hence $z(t):=\max_{0\le s\le t} e^{\omega s} \|u(s)\|_\alpha$ satisfies
$$
z(t)\le c  \|u^0\|_\alpha +\xi c c_{\alpha,\beta} \int_0^\infty e^{-\delta \tau}\tau^{-\alpha}\,\rd \tau\, z(t)\,,\quad t\ge 0\,.
$$
Choosing $\xi>0$  beforehand sufficiently small, we find
$$
e^{\omega t}\|u(t)\|_\alpha \le 2 c  \|u^0\|_\alpha\,,\quad t\ge 0\,.
$$
Recalling that $v(t)=u(t)+v_*$ and $v^0=u^0+v_*$, the   proof of Theorem~\ref{T2} is complete.
\end{proof}

We now prove   the  instability result  in $E_\alpha$ stated in  Theorem \ref{T3}.
The proof  relies on the  corresponding instability result  in $E_1$ established for fully nonlinear parabolic problems in
\cite[Theorem 9.1.3]{L95}. 

\begin{proof}[Proof of Theorem \ref{T3}]
 Suppose \eqref{a1a}-\eqref{a1c} and let $v_*\in O_1$ be an equilibrium to \eqref{EE} such that \eqref{a2a}-\eqref{a2c} hold.
 Given $v^0\in O_\alpha$, let $v(\cdot;v^0)$ be the corresponding solution found in Theorem \ref{T1} and set 
 \[
 u^0:=v^0-v_*\,,\quad u:=v(\cdot;v^0)-v^*\,.
 \]
Then $u$ is a solution of the evolution problem
 \begin{align}\label{NEE}
  \dot u=\bA u+F(u)\,,\quad t>0\,,\qquad u(0)=u^0\,,
 \end{align}
where,  given  $w\in \wh O_1:=O_1-v_*$, we have set
\begin{align*}
 F(w):=-A(w+v^*)(w+v^*)+f(w+v^*)+A(v_*)w+(\p A(v_*)[w])v_*-\p f(v_*)w\,.
\end{align*}

Note that $F\in C^{2-}(\wh O_1, E_0)$ with $F(0)=0$ and $\p F(0)=0$. 
Moreover, condition \eqref{a2b}  and \cite[I.Theorem 1.3.1]{Am95} ensure that $-\bA\in\mathcal{H}(E_1,E_0)$, so that we 
are in a position to use \cite[Theorem 9.1.3]{L95} for equation \eqref{NEE}.
Hence, there exists a nontrivial backward solution
\[
z\in C((-\infty,0], E_1)\cap C^1((-\infty,0],E_0)
\]
to \eqref{NEE}, that is, $z$ satisfies
$$\dot z=\bA z+F(z)\,,\quad t\le 0\,,\qquad z(0)\not=0\,,
$$
 and,  for some $\omega\in(0,\omega_+)$, 
\[
\sup_{t\leq 0} \big(e^{-\omega t}\|z(t)\|_1\big)<\infty\,.
\]
For $ 1\leq n\in\N$ we thus find $t_n<0$ such that $\|z(t_n)\|_\alpha<1/n$.
The function $$v_n:=v_*+z(\cdot+t_n):[0,-t_n]\to O_\alpha$$ is then a  solution to \eqref{EE} with initial value  $v_n^0:=v_*+z(t_n)\in O_\alpha$
 and
\[
\|v_n(t)-v_n(s)\|_\beta\leq \|v_n(t)-v_n(s)\|_0^{1-\beta}\|v_n(t)-v_n(s)\|_1^\beta\leq C_n|t-s|^{1-\beta}\,,\quad 0\leq s\leq t\leq t_n\,.
\]
 Theorem~\ref{T1} guarantees that $-t_n<t^+(v_n^0)$ and $v_n=v(\cdot;v_n^0)\vert_{[0,t_n]}.$  
Let $U$   be a   neighborhood of $v_*$ in $O_\alpha$ with $v_*+z(0)\not\in U$. The  statement of Theorem~\ref{T3} now follows from the observation that 
$v(-t_n;v_n^0)=v_*+z(0)\not\in U$.
\end{proof}

\section{Examples}\label{Sec:4}

We  present two applications of our stability results.

\begin{exmp}[{\bf A Muskat problem without surface tension}]\label{E1}
 The horizontally  periodic motion of two immiscible fluid layers of unbounded  heights and equal viscosities (denoted by $\mu$) in a 
homogeneous porous medium with permeability 
$k$ can be described, under the assumption that  the fluid system is
close to the rest state far away from the interface separating the fluids,  by the  following equation
\begin{equation}\label{PE1}
\left\{ 
\begin{array}{rlll}
 \p_tf(t,x)\!\!\!\!\!&=&\!\!\!\!\!-\displaystyle\frac{k\Delta_\rho}{4\pi\mu}\, \p_xf(t,x) 
 \,{\rm PV}\int_{-\pi}^{\pi}\p_xf(t,x-s)\frac{(T_{[x,s]} f(t))(1+t_{[s]}^2)}{t_{[s]}^2+(T_{[x,s]} f(t))^2 }\,\rd s\\[2ex]
 &&\!\!\displaystyle-\frac{k\Delta_\rho}{4\pi\mu} \, {\rm PV} \int_{-\pi}^{\pi}\p_xf(t,x-s)
 \frac{t_{[s]}[1-(T_{[x,s]} f(t))^2]}{t_{[s]}^2+(T_{[x,s]} f(t))^2 }\,\rd s,\qquad t>0,\,\quad x\in\R\,, \\[1ex]
 f(0)\!\!\!\!\!&=&\!\!\!\!\!f^0,
\end{array}\right.
\end{equation}
see e.g. \cite{CCG11, MM17x}.
The motion is additionally assumed to be two-dimensional and  the unknown $f=f(t,x)$ in \eqref{PE1} is the function parameterizing the 
sharp interface between the fluids.
Furthermore, $g$ is  the Earth's gravity, $\rho_-$ [resp. $\rho_+$] is the density of the fluid located beneath  [resp. above]
the graph $[y=f(t,x)]$,     ${\rm PV}$ stands for 
the principle value,
and \eqref{PE1} is considered in the regime where the Rayleigh-Taylor condition
\begin{equation*}\label{RTC}
\Delta_\rho:=g(\rho_--\rho_+)>0 
\end{equation*}
holds.
 Surface tension effects have been neglected in the derivation of \eqref{PE1} and 
the  abbreviations
\[
\delta_{[x,s]}f:=f(x)-f(x-s)\,,\qquad T_{[x,s]} f=\tanh\Big(\frac{\delta_{[x,s]}f}{2}\Big)\,, \qquad t_{[s]}=\tan\Big(\frac{s}{2}\Big)\,,
\]
are used.
Though not obvious at first glance, the problem $\eqref{PE1}$ has a quasilinear structure.
Indeed, given $r\in(3/2,2)$ define
\begin{align*} 
 \Phi(f)[h](x)&:= -\frac{k\Delta_\rho}{4\pi\mu}
 \PV\int_{-\pi}^{\pi}\frac{\delta_{[x,s]}(\p_x h)}{t_{[s]}}\frac{1}{1+\big(T_{[x,s]} f/t_{[s]}\big)^2 }\,\rd s \\[1ex]
&\hspace{0.424cm}+\frac{k\Delta_\rho}{4\pi\mu} \p_xh(x)\PV\int_{-\pi}^{\pi}\Big[\frac{\p_xf(x-s)}{t_{[s]}}\frac{T_{[x,s]} f}{t_{[s]}}+\frac{1}{t_{[s]}}  \Big]
\frac{1}{1+\big(T_{[x,s]} f/t_{[s]}\big)^2 } \, \rd s\\[1ex]
 &\hspace{0.424cm}+\frac{k\Delta_\rho}{4\pi\mu} \p_xh(x) \int_{-\pi}^{\pi}\p_xf(x-s)\frac{T_{[x,s]} f}{1+\big(T_{[x,s]} f/t_{[s]}\big)^2 }\,\rd s \\[1ex]
 &\hspace{0.424cm}- \frac{k\Delta_\rho}{4\pi\mu} 
 \int_{-\pi}^{\pi}\p_xh(x-s)\frac{\big(T_{[x,s]} f/t_{[s]}\big)T_{[x,s]} f}{1+\big(T_{[x,s]} f/t_{[s]}\big)^2 }\,\rd s
\end{align*}
for $f\in H^r(\s)$, $h\in H^2(\s)$, and $x\in\R$. Then 
\eqref{PE1} can be recast in the form
\begin{equation*} 
\dot f+ \Phi(f)[f] =0\,, \quad t>0\,,\qquad f(0)=f^0,
\end{equation*}
where
\begin{align*} \label{PPE1}
  \Phi\in {\rm C}^{\omega}(H^r(\s),\mathcal{H}(H^2(\s), H^1(\s)))\qquad\text{for  $r\in(3/2,2)$},
 \end{align*}
 see \cite{MM17x}.
Choosing $(\cdot,\cdot)_\theta$  in Theorem \ref{T1} to be the complex interpolation functor,
it follows that the problem \eqref{PE1}
is well-posed in $H^r(\s)$ for each $r\in(3/2,2)$, cf. \cite[Theorem 1.1]{MM17x}.

According to \cite[Remark 3.4]{M18x}, the equilibria to \eqref{PE1} lying
in $H^2(\s)$ are  the constant functions. Moreover, the flow \eqref{PE1} preserves the integral mean of the initial data.
 Hence, when studying the stability properties of the zero  solution, it is natural to consider perturbations with zero integral mean.
 Letting 
 \[
H^r_0(\s):=\Big\{f\in H^r(\s)\,:\, \int_0^{2\pi}f\,\rd x=0\Big\}\,, \qquad r\geq0\,,
\]
it is shown in  \cite{MM17x} that, in fact,
\begin{align*}  
  \Phi\in {\rm C}^{\omega}(H^r_0(\s),\mathcal{H}(H^2_0(\s), H^1_0(\s)))\qquad\text{ for  $r\in(3/2,2)$}.
 \end{align*}
 Since the spectrum of the linearization $-\p\Phi(0)\in\mathcal{L}(H^2_0(\s), H^1_0(\s))$ is given by 
 $$\sigma(-\p\Phi(0))=\{-k\Delta_\rho m/(2\mu)\,:\, m\in\N\setminus\{0\}\},$$
 see \cite{MM17x}, we are in a position to apply Theorem \ref{T2} and  conclude  that the zero solution is exponentially stable in the natural
 phase space 
 $H^r_0(\s)$.

 \begin{thm}[Exponential stability]\label{MT2}
  Let  $\Delta_\rho>0$ and $r\in(3/2,2)$. Then, given $\omega\in(0,k\Delta_\rho/2\mu)$, there exist constants $\ve_0>0$ and $M>0$ such that,
  for each $f^0\in H^r_0(\s)$  with $\|f^0\|_{H^{r}(\s)}\leq \ve_0$, the solution  $f(\cdot;f^0)$ to \eqref{PE1} exists globally in time and
  \begin{align*}
   \|f(t;f^0)\|_{H^r(\s)}\leq Me^{-\omega t}\|f_0\|_{H^{r}(\s)}\,,\quad t\geq 0\,.
  \end{align*} 
 \end{thm}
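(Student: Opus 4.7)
The plan is to apply Theorem~\ref{T2} with a suitable choice of abstract spaces that identifies $H^r_0(\s)$ as the phase space $E_\alpha$. I would set $E_0 := H^1_0(\s)$, $E_1 := H^2_0(\s)$, and let $(\cdot,\cdot)_\theta$ denote the complex interpolation functor, so that $E_\theta = H^{1+\theta}_0(\s)$ for $\theta\in(0,1)$. Put $\alpha := r-1 \in (1/2,1)$ so that $E_\alpha = H^r_0(\s)$, pick any $\beta \in (1/2,\alpha)$, and set $\gamma:=\beta$; these choices satisfy \eqref{a1a}. Writing \eqref{PE1} in the abstract form $\dot v + A(v)v = f(v)$ with $A(v):=\Phi(v)$ and $f(v):=0$, I would take $O_\beta := E_\beta$, which is trivially open and nonempty, so that \eqref{a1b} holds.

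Next I would verify the remaining hypotheses of Theorem~\ref{T2}. Since $1+\beta\in(3/2,r)$, the real-analyticity of $\Phi$ on $H^{1+\beta}_0(\s)$ recalled from \cite{MM17x} yields $A \in C^\omega(O_\beta,\mathcal{H}(E_1,E_0))$, and $f \equiv 0$ is trivially smooth into $E_\gamma$, so \eqref{a1c} holds. The candidate equilibrium is $v_* := 0 \in O_1$, and it satisfies $A(v_*)v_* = \Phi(0)[0] = 0 = f(v_*)$, giving \eqref{a1e}. Condition \eqref{a1f} is immediate because $f \equiv 0$ and the map $v \mapsto A(v)v_* \equiv 0$ is trivially Fr\'echet differentiable at $v_*$.

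For the linearization, since $v_*=0$ and $f\equiv 0$, the abstract operator reduces to $\bA = -A(0) - (\p A(0)[\cdot])\cdot 0 + 0 = -\Phi(0)$. This coincides with $-\p\Phi(0)$, the Fr\'echet derivative at $0$ of the map $v \mapsto \Phi(v)[v]$, because the product rule gives $\p\Phi(0)[h] = (\p\Phi(0)[h])\cdot 0 + \Phi(0)[h] = \Phi(0)[h]$. Using the spectral identification $\sigma(-\p\Phi(0)) = \{-k\Delta_\rho m/(2\mu) : m\in\N\setminus\{0\}\}$ recalled from \cite{MM17x}, I obtain $\omega_0 = k\Delta_\rho/(2\mu) > 0$, so \eqref{a1g} holds.

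With all hypotheses of Theorem~\ref{T2} verified, its conclusion \eqref{stable} translates directly to the claim: given $\omega \in (0, k\Delta_\rho/(2\mu))$, there exist $\ve_0>0$ and $M\ge 1$ such that $\|f(t;f^0)\|_{H^r(\s)} = \|v(t;f^0)-v_*\|_\alpha \le M e^{-\omega t}\|f^0\|_{H^r(\s)}$ for all $f^0 \in H^r_0(\s)$ with $\|f^0\|_{H^r(\s)} \le \ve_0$. Since the argument is essentially a translation exercise from the concrete Muskat problem to the abstract framework, no genuine obstacle arises; the only subtlety is the coordinated choice of $\alpha = r-1$ and $\beta \in (1/2,\alpha)$, which must simultaneously yield $E_\alpha = H^r_0(\s)$ and keep $E_\beta = H^{1+\beta}_0(\s)$ inside the Sobolev range $(3/2,2)$ where $\Phi$ is analytic with values in $\mathcal{H}(H^2_0(\s), H^1_0(\s))$.
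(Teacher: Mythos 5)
Your proposal is correct and follows essentially the same route as the paper: Example~\ref{E1} likewise takes $E_0=H^1_0(\s)$, $E_1=H^2_0(\s)$ with the complex interpolation functor so that $E_\alpha=H^r_0(\s)$, uses the analyticity of $\Phi$ and the spectral identity $\sigma(-\p\Phi(0))=\{-k\Delta_\rho m/(2\mu):m\in\N\setminus\{0\}\}$ from \cite{MM17x}, and invokes Theorem~\ref{T2} with $\omega_0=k\Delta_\rho/(2\mu)$. Your explicit bookkeeping of $\alpha=r-1$, $\beta\in(1/2,\alpha)$, $\gamma=\beta$, $f\equiv 0$, and the identification $\bA=-\Phi(0)$ via the product rule is exactly the translation the paper leaves implicit.
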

 
 The exponential stability of the zero solution under perturbations in $H^2_0(\s)$ has been established only recently in
   \cite[Theorem 1.3]{MM17x} by using the fully nonlinear principle of linearized stability from \cite{L95} and the abstract 
   formulation presented above. 
   Thus, Theorem~\ref{MT2} improves this result to stability in the optimal phase space $H_0^r(\s)$ with $r\in (3/2,2)$. Let us also mention that,
   by means of energy  techniques, the authors of \cite{BCS16} have previously derived 
   decay estimates  with respect to  $H^r(\s)$-norms, $r\in[0,2)$,  for solutions
   corresponding to small  initial data  in $H^2_0(\s)$.
\end{exmp}

\begin{exmp}[{\bf The Muskat problem with  surface tension}]\label{E2}
 The physical scenario   in this example is similar to  the one  in Example \ref{E1}, 
 but now we consider the   general case when the viscosities of the fluids  are not necessarily equal, that is,
 $\mu_--\mu_+\in\R.$
 In addition, the fluid system  moves with constant velocity $(0,V)$, with $V\in\R$, in the vertical plane, the interface between
 the fluids is  the graph 
 $[y=f(t,x)+tV]$, and  the fluid  velocities are set to be
asymptotically equal  to $(0,V)$ far away from the interface.
Moreover, surface tension effects are taken into account at the interface between the   fluids, and  we denote by
$\sigma>0$ the surface tension coefficient and by $\kappa(f)$ the curvature of the moving interface.
The mathematical model consists of the following set of equations
\begin{equation}\label{PE2}
\left\{ 
\begin{array}{rllll}
 \displaystyle\p_tf(t,x)\!\!\!\!\!&=&\!\!\!\!\!  \displaystyle\frac{1}{4\pi} 
 \PV\int_{-\pi}^{\pi}\frac{\p_xf(t,x) (1+t_{[s]}^2)(T_{[x,s]}f(t))+t_{[s]}[1-(T_{[x,s]}f(t))^2] }{t_{[s]}^2+(T_{[x,s]}f(t))^2 }
 \ov\omega(t,x-s)\, \rd s\,,\\[2ex]
\ov\omega(t,x)\!\!\!\!\!&=&\!\!\!\!\! \displaystyle\frac{2k}{\mu_-+\mu_+}\p_x(\sigma\kappa(f(t))-\Theta f(t))(x)\\[2ex]
&&\!\!\!\!-\displaystyle \frac{a_\mu}{2\pi } \PV\int_{-\pi}^{\pi}\frac{\p_xf(t,x)t_{[s]}[1-(T_{[x,s]}f(t))^2]
- (1+t_{[s]}^2)T_{[x,s]}f(t)}{t_{[s]}^2+(T_{[x,s]}f(t))^2 }\ov\omega(t,x-s)\, \rd s\,,
\end{array}\right.
\end{equation}
 for $t\geq 0 $ and $ x\in\R,$ which is supplemented by the initial condition
 \begin{equation}\label{ICP}
 f(0)=f^0.
\end{equation}
Furthermore, $\Theta$ and $a_\mu$  are the constants 
\begin{equation*}
\Theta:=g(\rho_--\rho_+)+\frac{\mu_--\mu_+}{k}V,\qquad a_\mu:=\frac{\mu_--\mu_+}{\mu_-+\mu_+}.
\end{equation*}
The function $\ov\omega$ is also unknown, but can be determined by $f$. Indeed, given $f\in H^2(\s)$ and $h\in H^3(\s)$, the 
equation
\[
(1+a_\mu\bA(f))[\ov\omega]=\frac{h'''}{(1+f'^2)^{3/2}}-3\frac{f'f''h''}{(1+f'^2)^{3/2}}-\Theta h'\,,
\]
where
\begin{align} \label{DFDD}
 \bA(f)[\ov\omega](x):=\frac{1}{2\pi }\PV\int_{-\pi}^{\pi}\frac{\p_xf(x)t_{[s]}[1-(T_{[x,s]}f)^2]- (1+t_{[s]}^2)T_{[x,s]}f}{t_{[s]}^2
 +(T_{[x,s]}f)^2 }\ov\omega(x-s)\, \rd s\,,
\end{align}
has a unique solution $\ov\omega=:\ov\omega(f)[h]\in L_{2,0}(\s),$ cf. \cite[Proposition 4.1]{M18x}.
Letting 
 \begin{align*} 
  \bB(f)[\ov\omega](x):= \frac{1}{2\pi} \PV\int_{-\pi}^{\pi}\frac{\p_xf(x) (1+t_{[s]}^2)(T_{[x,s]}f)+t_{[s]}[1-(T_{[x,s]}f)^2] }
  {t_{[s]}^2+(T_{[x,s]}f)^2 }\ov\omega(x-s)\, \rd s,
 \end{align*}
 the evolution problem \eqref{PE2}-\eqref{ICP} can be formulated as
 \begin{equation*} 
\dot f+\Phi(f)[f] =0\,, \quad t>0\,,\qquad f(0)=f^0,
\end{equation*}
where
$$ \Phi(f)[h]:=-\frac{\sigma k}{\mu_-+\mu_+}\bB(f)[\ov\omega(f)[h]]\,, \qquad f\in H^2(\s)\,,\quad h\in H^3(\s)\,,$$
satisfies
\begin{align*}  
  \Phi\in {\rm C}^{\omega}(H^2(\s),\mathcal{H}(H^3(\s), L_{2}(\s)))\cap {\rm C}^{\omega}(H^2_0(\s),\mathcal{H}(H^3_0(\s), L_{2,0}(\s)))\,,
 \end{align*}
 cf. \cite[Section 4]{M18x}. 
 Theorem \ref{T1} yields then the well-posedness of the problem \eqref{PE2}-\eqref{ICP} in the phase space $H^r(\s)$ for each $r\in(2,3)$.
 
 Concerning the stability issue, we point out that constants are again equilibria and the Theorems \ref{T2}-\ref{T3} can be used to study the 
 stability properties of the zero solution in the phase space $H^r(\s)$ and with respect to perturbations with zero integral mean. 
 Indeed, the linearized operator $-\p\Phi(0)\in\kL(H^3_0(\s), L_{2,0}(\s))$ has spectrum given by
 \[\sigma(-\p\Phi(0))=\Big\{-\frac{\sigma k}{\mu_-+\mu_+} \Big(m^3+\frac{\Theta}{\sigma}m\Big)\,:\,m\in\N\setminus\{0\}\Big\}.\] 
 Together with Theorem \ref{T2}-\ref{T3}  it follows  that the sign of $\Theta+\sigma$ determines  the stability and instability of
 the zero solution in the phase space $H^r(\s)$. 

\begin{thm}[{\cite[Theorem 1.3]{M18x}}]
 Let $r\in(2,3)$. 
 \begin{itemize}
 \item[(a)]  If $\Theta+\sigma>0$, then the zero solution is stable in $H^r(\s)$. More precisely,
given  $$\omega\in(0,k(\sigma+\Theta)/(\mu_-+\mu_+))\,,$$  there are constants $\ve_0>0$ and $M>0$,
such that, if $f^0\in  H^r_0(\s)$ satisfies 
 \mbox{$\|f^0\|_{H^{r}(\s)}\leq\ve_0$}, 
   then the solution   $f(\cdot;f^0)$ to \eqref{PE2}-\eqref{ICP} exists globally  and
  \begin{align*}
   \|f(t;f^0)\|_{H^r(\s) } \leq Me^{-\omega t}\|f^0\|_{H^{r}(\s) }\,,\qquad \text{$t\geq 0$\,.}
  \end{align*}
    \item[(b)]   If $\Theta+\sigma<0$, then the zero solution is unstable in $H^r(\s)$. More precisely, there exist  
   $R>0$ and a sequence $(f_n^0)\subset  H^{r}_0(\s)$ of initial data  such that \\[-2ex]
\begin{itemize}
\item[$\bullet$] $f_n^0\to0 $ in $ H^{r}(\s)$,\\[-2ex]
\item[$\bullet$] there exists $t_n\in(0,t^+(f_n^0))$ with $\|f(t_n;f^0_n)\|_{H^r(\s)}=R$.\\[-2ex]
\end{itemize}
\end{itemize}
\end{thm}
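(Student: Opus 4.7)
My plan is to verify the hypotheses of Theorems~\ref{T2} and~\ref{T3} for the abstract formulation $\dot f + \Phi(f)[f] = 0$ from Example~\ref{E2}. I take $E_0 := L_{2,0}(\s)$, $E_1 := H^3_0(\s)$, and $(\cdot,\cdot)_\theta$ to be the complex interpolation functor, so that $E_\theta = H^{3\theta}_0(\s)$ for $\theta \in (0,1)$. For the given $r \in (2,3)$ I set $\alpha := r/3 \in (2/3, 1)$, giving the desired phase space $E_\alpha = H^r_0(\s)$, and fix $\gamma, \beta$ with $2/3 < \gamma \leq \beta < \alpha$. Then $E_\beta \hookrightarrow H^2_0(\s)$, so the stated analyticity $\Phi \in C^{\omega}(H^2_0(\s), \mathcal{H}(H^3_0(\s), L_{2,0}(\s)))$ furnishes \eqref{a1a}--\eqref{a1c} on some open neighborhood $O_\beta$ of zero in $E_\beta$, with $A := \Phi$ and zero source term.

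Next I would identify the linearization at the equilibrium $v_* := 0 \in O_1$. Since $v_* = 0$, both $(\partial A(v_*)[\cdot])v_*$ and $\partial f(v_*)$ vanish, leaving
\[
\bA = -\Phi(0) = -\partial\Phi(0),
\]
where the last equality is the derivative of $v \mapsto \Phi(v)[v]$ at $v = 0$. Hence the spectrum of $\bA$ is exactly the one recorded in Example~\ref{E2}.

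For part (a), under $\Theta + \sigma > 0$ the elementary inequality $m(m^2 + \Theta/\sigma) \geq (\sigma+\Theta)/\sigma$ for every integer $m \geq 1$ (attained at $m = 1$) shows that \eqref{a1g} holds with spectral bound $-\omega_0 = -k(\sigma+\Theta)/(\mu_-+\mu_+) < 0$, and Theorem~\ref{T2} then yields the exponential decay estimate for every $\omega \in (0, \omega_0)$.

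For part (b), under $\Theta + \sigma < 0$ the eigenvalue at $m = 1$ is strictly positive, so $\sigma_+(\bA) \neq \emptyset$ and \eqref{a2c} is satisfied. Condition \eqref{a2a} follows at once from the analyticity of $\Phi$ restricted to $H^3_0(\s)$, while \eqref{a2b} is trivial since $\partial f(v_*) = 0$ and $(\partial A(v_*)[\cdot])v_* = 0$ at $v_* = 0$. Applying Theorem~\ref{T3} produces a sequence $f_n^0 \to 0$ in $H^r_0(\s)$ whose solutions leave a fixed $O_\alpha$-neighborhood $U$ of zero at some time $t_n \in (0, t^+(f_n^0))$; taking any $R > 0$ strictly smaller than the $H^r$-radius of $U$ recovers the stated formulation. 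There is no genuine analytic obstacle here beyond the bookkeeping: the entire argument is a matching of the ingredients of Theorems~\ref{T2}--\ref{T3} against the properties of $\Phi$ already listed in Example~\ref{E2}, with the identification $E_\alpha = H^r_0(\s)$ through the complex interpolation functor being the one point that must be selected deliberately.
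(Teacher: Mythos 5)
Your proposal is correct and follows exactly the route the paper itself takes: cast \eqref{PE2}--\eqref{ICP} as $\dot f+\Phi(f)[f]=0$ with $E_0=L_{2,0}(\s)$, $E_1=H^3_0(\s)$, the complex interpolation functor giving $E_\alpha=H^r_0(\s)$ for $\alpha=r/3$, note that the linearization at $v_*=0$ reduces to $-\Phi(0)$ with the listed spectrum, and read off $\omega_0=k(\sigma+\Theta)/(\mu_-+\mu_+)$ (minimum of $m^3+(\Theta/\sigma)m$ at $m=1$) before invoking Theorem~\ref{T2}, respectively the finitely many unstable eigenvalues before invoking Theorem~\ref{T3}. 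The only cosmetic gap is that passing from ``$v(t;v_n^0)\notin U$'' to ``$\|f(t_n;f_n^0)\|_{H^r(\s)}=R$'' uses continuity of $t\mapsto\|f(t;f_n^0)\|_{H^r(\s)}$ and the intermediate value theorem, which you implicitly assume.
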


We point out that  here, but also  in Example \ref{PE1}, the other constant equilibria have the same stability properties as the zero solution.
Moreover, in the context of the Muskat problem \eqref{PE2} there may exist also other, finger-shaped equilibria (see \cite[Section 6]{M18x}
for a complete classification of the equilibria). Theorem~\ref{T3} can be used to prove that  small (with respect to the $H^r(\s)$-norm)
finger-shaped equilibria are unstable, cf.  \cite[Theorem 1.5 (iii)]{M18x}.
 \end{exmp}
\bibliographystyle{siam}
\bibliography{Literature}

\begin{thebibliography}{10}

\bibitem{AT88}
{\sc P.~Acquistapace and B.~Terreni}, {\em On quasilinear parabolic systems},
  Math. Ann., 282 (1988), p.~315–335.

\bibitem{A83}
{\sc H.~Amann}, {\em {Gewöhnliche {D}ifferentialgleichungen}}, {de Gruyter
  Lehrbuch. [de Gruyter Textbook]}, Walter de Gruyter \& Co., Berlin, 1983.

\bibitem{A86}
\leavevmode\vrule height 2pt depth -1.6pt width 23pt, {\em Quasilinear
  evolution equations and parabolic systems}, Trans. Amer. Math. Soc., 293
  (1986), p.~191–227.

\bibitem{Am88}
\leavevmode\vrule height 2pt depth -1.6pt width 23pt, {\em {Dynamic theory of
  quasilinear parabolic equations. {I}. {A}bstract evolution equations}},
  Nonlinear Anal., 12 (1988), p.~895–919.

\bibitem{A90}
\leavevmode\vrule height 2pt depth -1.6pt width 23pt, {\em Dynamic theory of
  quasilinear parabolic equations. {II}. {R}eaction-diffusion systems},
  Differential Integral Equations, 3 (1990), p.~13–75.

\bibitem{Am93}
\leavevmode\vrule height 2pt depth -1.6pt width 23pt, {\em {Nonhomogeneous
  linear and quasilinear elliptic and parabolic boundary value problems}}, in
  {Function spaces, differential operators and nonlinear analysis
  ({F}riedrichroda, 1992)}, vol.~133 of {Teubner-Texte Math.}, Teubner,
  Stuttgart, 1993, p.~9–126.

\bibitem{Am95}
\leavevmode\vrule height 2pt depth -1.6pt width 23pt, {\em {Linear and
  Quasilinear Parabolic Problems. {V}ol. {I}}}, vol.~89 of {Monographs in
  Mathematics}, Birkhäuser Boston, Inc., Boston, MA, 1995.
\newblock Abstract linear theory.

\bibitem{A05}
\leavevmode\vrule height 2pt depth -1.6pt width 23pt, {\em Maximal regularity
  and quasilinear parabolic boundary value problems}, in Recent advances in
  elliptic and parabolic problems, World Sci. Publ., Hackensack, NJ, 2005,
  p.~1–17.

\bibitem{BCS16}
{\sc C.~H.~A. Cheng, R.~Granero-Belinchón, and S.~Shkoller}, {\em
  {Well-posedness of the {M}uskat problem with {$H^2$} initial data}}, Adv.
  Math., 286 (2016), p.~32–104.

\bibitem{CCG11}
{\sc A.~Córdoba, D.~Córdoba, and F.~Gancedo}, {\em {Interface evolution: the
  {H}ele-{S}haw and {M}uskat problems}}, Ann. of Math. (2), 173 (2011),
  p.~477–542.

\bibitem{DaP96}
{\sc G.~{Da Prato}}, {\em Fully nonlinear equations by linearization and
  maximal regularity, and applications}, in Partial differential equations and
  functional analysis, vol.~22 of Progr. Nonlinear Differential Equations
  Appl., Birkhäuser Boston, Boston, MA, 1996, p.~80–92.

\bibitem{DaPG79}
{\sc G.~{Da Prato} and P.~Grisvard}, {\em Equations d'évolution abstraites non
  linéaires de type parabolique}, Ann. Mat. Pura Appl. (4), 120 (1979),
  p.~329–396.

\bibitem{DaPL88}
{\sc G.~{Da Prato} and A.~Lunardi}, {\em Stability, instability and center
  manifold theorem for fully nonlinear autonomous parabolic equations in
  {B}anach space}, Arch. Rational Mech. Anal., 101 (1988), p.~115–141.

\bibitem{D89}
{\sc A.-K. Drangeid}, {\em The principle of linearized stability for
  quasilinear parabolic evolution equations}, Nonlinear Anal., 13 (1989),
  p.~1091–1113.

\bibitem{ELW15}
{\sc J.~Escher, P.~Laurençot, and C.~Walker}, {\em {Dynamics of a free
  boundary problem with curvature modeling electrostatic {MEMS}}}, Trans. Amer.
  Math. Soc., 367 (2015), p.~5693–5719.

\bibitem{G88}
{\sc D.~Guidetti}, {\em Convergence to a stationary state and stability for
  solutions of quasilinear parabolic equations}, Ann. Mat. Pura Appl. (4), 151
  (1988), p.~331–358.

\bibitem{Lu82}
{\sc A.~Lunardi}, {\em Analyticity of the maximal solution of an abstract
  nonlinear parabolic equation}, Nonlinear Anal., 6 (1982), p.~503–521.

\bibitem{Lu84}
\leavevmode\vrule height 2pt depth -1.6pt width 23pt, {\em Abstract quasilinear
  parabolic equations}, Math. Ann., 267 (1984), p.~395–415.

\bibitem{Lu85}
\leavevmode\vrule height 2pt depth -1.6pt width 23pt, {\em Asymptotic
  exponential stability in quasilinear parabolic equations}, Nonlinear Anal., 9
  (1985), p.~563–586.

\bibitem{Lu85b}
\leavevmode\vrule height 2pt depth -1.6pt width 23pt, {\em Global solutions of
  abstract quasilinear parabolic equations}, J. Differential Equations, 58
  (1985), p.~228–242.

\bibitem{Lu87}
\leavevmode\vrule height 2pt depth -1.6pt width 23pt, {\em On the local
  dynamical system associated to a fully nonlinear abstract parabolic
  equation}, in Nonlinear analysis and applications ({A}rlington, {T}ex.,
  1986), vol.~109 of Lecture Notes in Pure and Appl. Math., Dekker, New York,
  1987, p.~319–326.

\bibitem{L95}
\leavevmode\vrule height 2pt depth -1.6pt width 23pt, {\em {Analytic Semigroups
  and Optimal Regularity in Parabolic Problems}}, {Progress in Nonlinear
  Differential Equations and their Applications, 16}, Birkhäuser Verlag,
  Basel, 1995.

\bibitem{MM17x}
{\sc A.-V. Matioc and B.-V. Matioc}, {\em {Well-posedness and stability results
  for a quasilinear periodic Muskat problem}},  (2017).
\newblock arXiv:1706.09260.

\bibitem{M16x}
{\sc B.-V. Matioc}, {\em {The Muskat problem in 2D: equivalence of
  formulations, well-posedness, and regularity results}},  (2016).
\newblock preprint. arXiv:1610.05546.

\bibitem{M17x}
\leavevmode\vrule height 2pt depth -1.6pt width 23pt, {\em {Viscous
  displacement in porous media: the Muskat problem in 2D}}, Trans. Amer. Math.
  Soc.,  (2017).
\newblock to appear. arXiv:1701.00992.

\bibitem{M18x}
\leavevmode\vrule height 2pt depth -1.6pt width 23pt, {\em {Well-posedness and
  stability results for some periodic Muskat problems}},  (2018).
\newblock Preprint.

\bibitem{PF81}
{\sc M.~Potier-Ferry}, {\em The linearization principle for the stability of
  solutions of quasilinear parabolic equations. {I}}, Arch. Rational Mech.
  Anal., 77 (1981), p.~301–320.

\bibitem{P02}
{\sc J.~Prüss}, {\em Maximal regularity for evolution equations in
  {$L_p$}-spaces}, Conf. Semin. Mat. Univ. Bari,  (2002), p.~1–39 (2003).

\bibitem{PS16}
{\sc J.~Prüss and G.~Simonett}, {\em Moving interfaces and quasilinear
  parabolic evolution equations}, vol.~105 of Monographs in Mathematics,
  Birkhäuser/Springer, [Cham], 2016.

\bibitem{PSW18}
{\sc J.~Prüss, G.~Simonett, and M.~Wilke}, {\em Critical spaces for
  quasilinear parabolic evolution equations and applications}, J. Differential
  Equations, 264 (2018), p.~2028–2074.

\bibitem{PSZ09}
{\sc J.~Prüss, G.~Simonett, and R.~Zacher}, {\em On convergence of solutions
  to equilibria for quasilinear parabolic problems}, J. Differential Equations,
  246 (2009), p.~3902–3931.

\bibitem{PSZ09b}
\leavevmode\vrule height 2pt depth -1.6pt width 23pt, {\em On normal stability
  for nonlinear parabolic equations}, Discrete Contin. Dyn. Syst.,  (2009),
  p.~612–621.

\bibitem{ChW10}
{\sc C.~Walker}, {\em {Age-dependent equations with non-linear diffusion}},
  Discrete Contin. Dyn. Syst., 26 (2010), p.~691–712.

\end{thebibliography}
\end{document}